\newcommand{\LL}{{\cal L}}
\newcommand{\CA}{{\cal A}}
\newcommand{\CB}{{\cal B}}
\newcommand{\LG}{\mathfrak L}
\newcommand{\wbar}{\overline}
\newcommand{\vbar}{\, | \,}
\def\epsilon{\varepsilon}
\def\Pr{\mathbb P}
\newcommand{\supp}{\mbox{Supp}}
\newcommand{\Curr}{\mbox{Curr}}
\newcommand{\Out}{\mbox{Out}}
\newcommand{\Aut}{\mbox{Aut}}
\newcommand{\inv}{^{-1}}
\newcommand{\ssm}{\smallsetminus}
\newcommand{\FN}{F_N}   
\newcommand{\cvn}{\mbox{cv}_N}
\newcommand{\CQ}{{\cal Q}}
\newcommand{\R}{\mathbb R}
\newcommand{\N}{\mathbb N}
\newcommand{\Hy}{\mathbb H}
\def\qed{\hfill\rlap{$\sqcup$}$\sqcap$\par}
\def\bar{\overline}
\def\tilde{\widetilde}
\newtheorem{thm}{Theorem}[section]
\newtheorem{cor}[thm]{Corollary}
\newtheorem{lem}[thm]{Lemma}
\newtheorem{prop}[thm]{Proposition}
\newtheorem{conj}[thm]{Conjecture}
\newtheorem*{acknowledgements}{Acknowledgements}
\newcommand{\affiliationone}{}
\newcommand{\email}{\\}
\renewcommand{\and}{ and }
\theoremstyle{definition}
\newtheorem{defn}[thm]{Definition}
\newtheorem{rem}[thm]{Remark}
\title{$\R$-trees and laminations for free groups III:
Currents and dual $\R$-tree metrics}
\author{Thierry Coulbois, Arnaud Hilion \and Martin Lustig}
\begin{document}

\maketitle


\section{Introduction}
\label{sec:introIII}

A geodesic lamination $\LG$ on a closed hyperbolic surface $S$, when
provided with a transverse measure $\mu$, gives rise to a ``dual
$\R$-tree'' $T_{\mu}$, together with an action of $G = \pi_{1} S$ on
$T_{\mu}$ by isometries.  A point of $T_{\mu}$ corresponds precisely
to a leaf of the lift $\tilde \LG$ of $\LG$ to the universal covering
$\tilde S$ of $S$, or to a complementary component of $\tilde \LG$ in
$\tilde S$. The $G$-action on $T$ is induced by the $G$-action on
$\tilde S$ as deck transformations.  This construction is well known
(see \cite{morg}).  It is also known \cite{skora} that conversely, for
every small isometric action of a surface group $G = \pi_{1} S$ on a
minimal $\R$-tree $T$ there exists a ``dual'' measured lamination
$(\LG, \mu)$ on $S$, i.e. one has $T = T_{\mu}$ up to a
$G$-equivariant isometry.

\smallskip

This beautiful correspondence has tempted geometers and group
theorists to investigate possible generalizations, and the first one
arises if one replaces the closed surface by a surface with boundary,
and correspondingly the surface group $G$ by a free group $\FN$ of
finite rank $N \geq 2$. A first glimpse of the potential problems can
be obtained from two simultaneous but distinct identifications $\FN
\overset{\cong}{\longrightarrow} \pi_{1} S_{1}$ and $\FN
\overset{\cong}{\longrightarrow} \pi_{1} S_{2}$, thus obtaining
actions of $\pi_{1} S_{1}$ on a tree $T_{2}$ which are dual to a
measured lamination on $S_{2}$, but in general not dual to any
measured lamination on the surface $S_{1}$.

\smallskip

Worse, using the index of an $\R$-tree action by $\FN$ as introduced
in \cite{gl}, it is easily seen that for many (perhaps even ``most'')
small or very small $\R$-trees $T$ with isometric $\FN$-action there
is no identification whatsoever of $\FN$ with the fundamental group of
any surface that would make $T$ dual to a lamination. An example of
such trees are the forward limit trees $T_{\alpha}$ of certain
irreducible automorphisms with irreducible powers (so called {\em
iwip} automorphisms) of $\FN$.  Much like pseudo-Anosov surface
homeomorphisms, such an iwip automorphism has precisely one forward
and one backward limit tree, $T_{\alpha}$ and $T_{\alpha\inv}$
respectively, and it induces a North-South dynamics on the space
$\bar{CV}_{N}$ of projectivized very small $\FN$-actions on $\R$-trees
(see \cite{ll4}).  Note that, contrary to the case of pseudo-Anosov
homeomorphisms, it is a frequent occurence for an iwip automorphism
$\alpha$ (see Corollary~\ref{cor:paranon} below) that its stretching
factor $\lambda_{\alpha}$ is different from the stretching factor
$\lambda_{\alpha\inv}$ of its inverse.

\smallskip

In \cite{chl1-II} for any $\R$-tree $T$ with isometric $\FN$-action, a
{\em dual lamination} $L(T)$ has been defined, which is
the  generalization of the geodesic lamination $\LG$ for a
surface tree $T_{\mu}$ as discussed above.  The goal of the present
paper is to investigate the effect of putting an invariant measure
$\mu$ on the dual lamination $L(T)$, or, in the proper technical
terms, considering a free group current $\mu$ with support contained
in $L(T)$. We prove here, if the $\FN$-action on $T$ is very small
and has dense orbits, that such a current defines indeed an induced
measure on the metric completion $\wbar T$ of $T$.

\smallskip

In the special case considered above where $T = T_{\mu}$ is dual to a
measured lamination $(\LG, \mu)$ on a surface, then the transverse
measure $\mu$ defines indeed a current on $L(T_{\mu})$, and the
induced measure on $\wbar T_{\mu}$ defines a dual distance on
$T_{\mu}$ which is precisely the same as the original distance on
$T_{\mu}$ (i.e. the one that comes from the transverse measure $\mu$
on $\LG$).  For arbitrary very small trees $T$ with dense
$\FN$-orbits, the measure on $\bar T$ induced by a current $\mu$
on $L(T)$ defines also a metric on $T$, except that this {\em dual metric}
$d_{\mu}$ may in general be in various ways degenerate (compare \S
\ref{subsec:dualmetric} below).  In particular, the dual distance may
well be infinite for any two distinct points of $T$.  Alternatively,
it could be zero throughout the interior $T$ of $\bar T$.

\smallskip

The main result of this paper is to show that these ``exotic''
phenomena are not just theoretically possible, but that they actually
do occur in important classes of examples.

\smallskip

Let $\alpha \in Aut(F_{N})$ be an iwip automorphism, let $T_\alpha$ be
the forward limit tree of $\alpha$. Then the dual lamination
$L(T_\alpha)$ is uniquely ergodic (see
Proposition~\ref{prop:invtreegen}): it carries a projectively unique
non-trivial current $\mu$.  In this case the dual metric $d_\mu$ is
simply called the {\em dual distance} $d_{*}$ on $T_\alpha$ or on
$\bar T_{\alpha}$.

\begin{thm}
\label{thm:throughout}
Let $\alpha \in Aut(F_{N})$ be an iwip automorphism with
$\lambda_{\alpha} \neq \lambda_{\alpha\inv}$. Then the dual distance
$d_{*}$ on the forward limit tree $T_{\alpha}$ is zero or infinite
throughout $T_{\alpha}$.
\end{thm}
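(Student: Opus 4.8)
The plan is to exploit the unique ergodicity of $L(T_\alpha)$ together with the fact that $\alpha$ acts on the forward limit tree $T_\alpha$ as a homothety with stretching factor $\lambda_\alpha$, while it acts on the dual current $\mu$ (supported on $L(T_\alpha)$) by scaling it by the factor $\lambda_{\alpha\inv}$. The key point is to track how the dual distance $d_*$ transforms under $\alpha$. On the one hand, since $\alpha$ represents an automorphism of $F_N$ that multiplies the metric on $T_\alpha$ by $\lambda_\alpha$, and on the other hand the current $\mu$ — from which $d_*$ is built as an induced measure on $\wbar T_\alpha$ — gets multiplied by $\lambda_{\alpha\inv}$ under the same automorphism, one gets a scaling relation of the form $d_*(\alpha x, \alpha y) = \frac{\lambda_{\alpha\inv}}{\lambda_\alpha}\, d_*(x,y)$, or possibly $\lambda_\alpha \lambda_{\alpha\inv} \cdot d_*(x,y)$ depending on which normalizations one fixes; in any case the scaling factor is a fixed constant $c \neq 1$ because $\lambda_\alpha \neq \lambda_{\alpha\inv}$.

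First I would make precise the two scaling facts: (i) the homothety $H_\alpha\colon T_\alpha \to T_\alpha$ with $d(H_\alpha x, H_\alpha y) = \lambda_\alpha \, d(x,y)$, which is standard for forward limit trees of iwips; and (ii) the transformation law $\alpha_* \mu = \lambda_{\alpha\inv}^{-1} \mu$ (or $\lambda_{\alpha\inv}\mu$) for the unique current on $L(T_\alpha)$, which follows from uniqueness of $\mu$ up to scale together with the computation of the eigenvalue — this is where the backward stretching factor $\lambda_{\alpha\inv}$ enters, via the train-track representative of $\alpha\inv$ or via the Perron--Frobenius eigenvalue governing the current. Then I would verify that the construction $\mu \mapsto d_\mu$ on $\wbar T$ from \cite{chl1-II} (or the earlier part of this paper) is natural with respect to the $F_N$-action and hence intertwines $H_\alpha$ with $\alpha_*$ on currents: concretely, $d_{\alpha_*\mu}(H_\alpha x, H_\alpha y) = d_\mu(x,y)$, so that $d_*(H_\alpha x, H_\alpha y) = c\cdot d_*(x,y)$ with $c = \lambda_{\alpha\inv}/\lambda_\alpha \neq 1$ (using that $d_*$ is built from the ergodic $\mu$, normalized once and for all).

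Finally, I would run the following dichotomy argument. The homothety $H_\alpha$ has a unique fixed point or a fixed direction behavior controlled by North--South dynamics on $\bar{CV}_N$; more concretely, $H_\alpha$ contracts towards (or expands away from) its attracting point in $\wbar T_\alpha$, and the orbit $\{H_\alpha^n x\}$ accumulates in a controlled way. Fix two distinct points $x,y$ in the interior $T_\alpha$ and suppose $0 < d_*(x,y) < \infty$. Applying the scaling relation $n$ times gives $d_*(H_\alpha^n x, H_\alpha^n y) = c^n d_*(x,y)$, which tends to $0$ or $\infty$ according to whether $c<1$ or $c>1$. I would then combine this with the density of $F_N$-orbits and $F_N$-equivariance of $d_*$: any pair of interior points can be approximated, or conjugated, so that comparability of distances forces $d_*$ to take only the value $0$ on all such pairs (when $c<1$) or only $\infty$ (when $c>1$) — here one uses that $d_*$ is a pseudometric and the triangle inequality to propagate the degeneracy from a single orbit to all of $T_\alpha$. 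The main obstacle I anticipate is precisely this last propagation step: showing that a scaling relation with $c\neq 1$ on one homothety orbit, together with equivariance and density of orbits, genuinely forces $d_*\equiv 0$ or $d_*\equiv\infty$ \emph{throughout} $T_\alpha$, rather than merely along the attracting/repelling directions of $H_\alpha$. This will require a careful use of the branching structure of $T_\alpha$ and the fact that finite and nonzero values of $d_*$ would have to be preserved on a dense set of configurations while simultaneously being scaled by $c^n$, a contradiction as $n\to\infty$ or $n\to-\infty$.
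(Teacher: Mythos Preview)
Your setup is on the right track---the homothety $H$ on $T_\alpha$ and the scaling of the unique current are exactly the ingredients the paper uses---but there is a computational slip that derails the argument, and the contradiction mechanism you propose is not the one that actually closes the proof.

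\medskip

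\textbf{The scaling constant.} Your intertwining identity $d_{\alpha_*\mu}(Hx,Hy)=d_\mu(x,y)$ is correct (it follows from $H\CQ^2=\CQ^2\alpha$). Combined with $\alpha_*\mu_{\alpha\inv}=\lambda_{\alpha\inv}^{-1}\mu_{\alpha\inv}$, it yields
\[
d_*(Hx,Hy)=\lambda_{\alpha\inv}\,d_*(x,y),
\]
so the scaling constant is $c=\lambda_{\alpha\inv}$, \emph{not} $\lambda_{\alpha\inv}/\lambda_\alpha$. The original metric $d$ on $T_\alpha$ never enters the formula for $d_*$; the factor $\lambda_\alpha$ simply does not appear. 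Since $\lambda_{\alpha\inv}>1$ always, your dichotomy ``$c<1$ gives $d_*\equiv 0$, $c>1$ gives $d_*\equiv\infty$'' would force $d_*\equiv\infty$ for \emph{every} iwip, including geometric ones with $\lambda_\alpha=\lambda_{\alpha\inv}$ where $d_*$ is in fact positive and finite. So the hypothesis $\lambda_\alpha\neq\lambda_{\alpha\inv}$ cannot enter through the value of $c$; it must enter elsewhere.

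\medskip

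\textbf{Where the paper differs.} The paper's contradiction is not ``$c^n\to 0$ or $\infty$''. Instead it runs as follows. First, the propagation step you flagged as the obstacle is handled concretely: if some pair $P\neq Q$ in $T_\alpha$ has $0<d_*(P,Q)<\infty$, then iterating $H$ produces a segment $[H^n P,H^n Q]$ whose $\FN$-translates cover all of $T_\alpha$ (this uses primitivity of the transition matrix of a train track representative, so that a long enough edge-path crosses every edge). By $\FN$-equivariance of $d_*$ and subadditivity along segments, $d_*$ is then positive and finite throughout $T_\alpha$. Second---and this is the key idea you are missing---one now has a genuine $\R$-tree $T_\alpha^*=(T_\alpha,d_*)$ with isometric $\FN$-action, and $H$ is still an $\alpha$-twisted homothety of $T_\alpha^*$, with stretching factor $\lambda_{\alpha\inv}$. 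Hence $[T_\alpha^*]$ is a fixed point of the $\alpha_*$-action on $\overline{CV}_N$. By North--South dynamics on $\overline{CV}_N$ there are only two such fixed points, $[T_\alpha]$ and $[T_{\alpha\inv}]$. But the stretching factor of $\alpha$ on $T_\alpha^*$ is $\lambda_{\alpha\inv}$, which is $>1$ (ruling out $[T_{\alpha\inv}]$, where $\alpha$ contracts) and is $\neq\lambda_\alpha$ by hypothesis (ruling out $[T_\alpha]$). This is precisely where $\lambda_\alpha\neq\lambda_{\alpha\inv}$ is used.

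\medskip

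Your direct approach via $c^n$ does not yield a contradiction on its own: as $n\to\infty$ the points $H^n x,H^n y$ fly off (their $d$-distance is $\lambda_\alpha^n d(x,y)\to\infty$), so $d_*(H^n x,H^n y)\to\infty$ is entirely consistent with $d_*$ being positive and finite everywhere. To get a contradiction one genuinely needs a global rigidity statement---and the paper supplies it by recognising $(T_\alpha,d_*)$ as a third fixed tree in $\overline{CV}_N$, which cannot exist.
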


Geodesic currents have been introduced by F.~Bonahon for hyperbolic
manifolds \cite{bona1,bona2}.  They turn out to be a powerful tool,
and they also admit generalizations to a much larger setting, compare
\cite{furm}.  For free groups and their automorphisms, the first
serious application was given in the thesis of M.~Bestvina's student
R~Martin \cite{mart}.  Recently, I.~Kapovich rediscovered currents and
studied them systematically, see \cite{kapo1,kapo2}.  As Kapovich's
papers are very carefully written and very accessible to non-experts,
we will review geodesic currents here only briefly and refer for all
of the basic detail work to the papers of Kapovich.

\smallskip

The novelty in the setup presented here is the relationship between
currents and laminations, which we establish systematically through
studying, for any current $\mu$, the {\em support} $\supp(\mu)$. The
latter belongs to the space $\Lambda(\FN)$ of laminations for the free
group $\FN$, which has been defined and investigated in detail in
\cite{chl1-I}. This gives a rather natural map from the space
$\Curr(\FN)$ of currents to the space $\Lambda(\FN)$.  The space
$\Curr(\FN)$ of currents $\mu$, as well as the resulting compact space
$\Pr\Curr(\FN)$ of projectivized currents $[\mu]$, admit a natural
action of the group $\Out(\FN)$ of outer automorphisms of $\FN$.  The
results derived in Proposition~\ref{currenttolamination} and in
Lemmas~\ref{lem:noninjective}, \ref{lem:pseudocontinuous} and
\ref{lem:pseudosur} can be summarized as follows:

\begin{thm}
\label{thmtwo}
The map $\supp: \Curr(\FN) \to \Lambda(\FN)$ induces a map
\[
\Pr \supp: \Pr\Curr(\FN) \to \Lambda(\FN)
\]
which has the following properties:

\begin{enumerate}
\item $\Pr \supp$ is $\Out(\FN)$-invariant.
\item $\Pr \supp$ is not injective.
\item $\Pr \supp$ is not surjective. However, every 
lamination $L \in \Lambda(\FN)$ possesses a sublamination
$L_{0} \subset L$ which belongs to the image of the map
$\Pr\supp$.
\item $\Pr \supp$ is not continuous. However, if $(\mu_{n})_{n \in
\N}$ is a sequence of currents which converges to a current $\mu$,
then the sequence of algebraic laminations $\Pr\supp([\mu_{n}])$ has
at least one accumulation point in $\Lambda(\FN)$, and any such
accumulation point is a sublamination of $\Pr\supp([\mu])$.
\end{enumerate}
\end{thm}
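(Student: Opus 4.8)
The plan is to prove the four listed properties in turn; the first two are soft, and the substance lies in the third and fourth. \emph{For (1):} for $\Phi\in\Aut(\FN)$ the induced boundary homeomorphism $\partial^2\Phi$ of $\partial^2\FN$ carries the topological support of a Radon measure onto the support of its push-forward, so $\supp(\Phi\!\cdot\!\mu)=\Phi\!\cdot\!\supp(\mu)$; since the $\Out(\FN)$-actions on $\Pr\Curr(\FN)$ and on $\Lambda(\FN)$ are both induced by boundary homeomorphisms, with inner automorphisms acting trivially on either side, this is exactly the $\Out(\FN)$-equivariance asserted in Proposition~\ref{currenttolamination}. \emph{For (2):} it suffices to exhibit one lamination which is the support of two non-proportional currents. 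Take distinct primitive conjugacy classes $\gamma_{1}\neq\gamma_{2}^{\pm1}$; their counting currents $\eta_{\gamma_{1}},\eta_{\gamma_{2}}$ have disjoint supports (the $\FN$-orbits of the two axes) and are in particular linearly independent, so $\mu_{1}=\eta_{\gamma_{1}}+\eta_{\gamma_{2}}$ and $\mu_{2}=\eta_{\gamma_{1}}+2\eta_{\gamma_{2}}$ have distinct projective classes while $\supp(\mu+\nu)=\supp(\mu)\cup\supp(\nu)$ gives $\supp(\mu_{1})=\supp(\mu_{2})$. (Alternatively one may take for the common lamination any minimal non-uniquely-ergodic lamination, e.g.\ a non-uniquely-ergodic measured geodesic lamination on a surface with fundamental group $\FN$.)

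\emph{For (3):} I would first produce an algebraic lamination $L$ that is the support of no current. A convenient source is the orbit closure $L=\overline{\FN\!\cdot\!\ell}$ of a leaf $\ell$ chosen so that $\ell$ is isolated in $L$, while the orbit $\FN\!\cdot\!\ell$ is \emph{not} closed and hence accumulates onto a nonempty proper sublamination $L_{1}\subsetneq L$; such leaves exist — for instance, in a basis $A=\{a,b,\dots\}$, the line with endpoints $(a^{+\infty},b^{-1}a^{-\infty})$, whose $\FN$-orbit accumulates onto the axis-lamination of $a$. If some current $\mu$ had $\supp(\mu)=L$, then $\{\ell\}$, being open in $L$, would carry positive $\mu$-mass, so by $\FN$-invariance $\mu$ would have equal atoms on all of $\FN\!\cdot\!\ell$; as these points accumulate near $L_{1}$, this contradicts local finiteness of $\mu$. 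For the positive ``pseudo-surjectivity'' statement fix a nonempty $L\in\Lambda(\FN)$, a leaf $\ell$, and encode its line as a bi-infinite reduced word $\mathbf w$ over $A^{\pm1}$. Let $p_{n}$ be finite subwords of $\mathbf w$ of lengths tending to $\infty$, made cyclically reduced by deleting a bounded prefix and suffix, and set $\mu_{n}=|p_{n}|^{-1}\,\eta_{[p_{n}]}$. By compactness of the set of currents of unit length, a subsequence converges to a current $\mu_{\infty}\neq 0$ (the standard ``limit current'' mechanism, cf.~\cite{mart,kapo1}). A finite word whose $\mu_{\infty}$-cylinder has positive mass must occur with non-vanishing frequency as a cyclic subword of $p_{n}$ for $n$ large, hence — wrap-around occurrences having frequency $O(1/|p_{n}|)$ — genuinely inside $p_{n}$, hence inside $\mathbf w$; so every leaf of $\supp(\mu_{\infty})$ has all its finite subwords in $\mathbf w$, is therefore a limit of $\FN$-translates of $\ell$, and lies in $\overline{\FN\!\cdot\!\ell}\subseteq L$. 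Thus $L_{0}:=\supp(\mu_{\infty})$ is a nonempty sublamination of $L$ in the image of $\Pr\supp$.

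\emph{For (4):} discontinuity I would exhibit on a perturbation: take $\mu$ whose support is a proper sublamination of $\partial^2\FN$ and a current $\nu$ with $\supp(\nu)\not\subseteq\supp(\mu)$ (e.g.\ $\nu=\eta_{\gamma}$ with the axis of $\gamma$ not a leaf of $\supp(\mu)$); then $\mu_{n}=\mu+\tfrac1n\nu\to\mu$ while $\supp(\mu_{n})=\supp(\mu)\cup\supp(\nu)$ stays strictly larger than $\supp(\mu)$ for all $n$, so $\Pr\supp([\mu_{n}])$ does not converge to $\Pr\supp([\mu])$. For the ``pseudo-continuous'' part the tool is lower semicontinuity of the support of a measure: if $\mu_{n}\to\mu$ then, by the portmanteau inequality $\liminf_{n}\mu_{n}(U)\ge\mu(U)$ for open $U$, every leaf of $\supp(\mu)$ is a limit of leaves drawn from the $\supp(\mu_{n})$. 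Feeding this into the description of convergence in $\Lambda(\FN)$ from \cite{chl1-I} — which is phrased through the subword languages of the laminations — gives both the existence of an accumulation point of $\Pr\supp([\mu_{n}])$ and the fact that any such accumulation point is a sublamination of $\Pr\supp([\mu])$; this is Lemmas~\ref{lem:pseudocontinuous} and \ref{lem:pseudosur}.

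\emph{Where the difficulty lies.} Items (1) and (2) are essentially bookkeeping. The core of the argument is the construction in (3): manufacturing a genuine current out of an \emph{arbitrary} algebraic lamination, where the two points requiring care are the non-degeneracy $\mu_{\infty}\neq 0$ of the limit current and the verification that $\supp(\mu_{\infty})$ stays inside the prescribed lamination. The other delicate step is matching the purely measure-theoretic (lower) semicontinuity of $\supp$ against the precise topology placed on $\Lambda(\FN)$ in \cite{chl1-I}, which is what pins down the exact ``sublamination of the limit'' statement in item (4).
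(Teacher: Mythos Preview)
Your approach coincides with the paper's in all essentials: your non-injectivity and non-continuity examples are variants of the family $\mu(\lambda)$ used in Proposition~\ref{currenttolamination} and Lemma~\ref{lem:noninjective}; your isolated-leaf lamination with endpoints $(a^{+\infty}, b^{-1}a^{-\infty})$ is exactly the paper's $Z=\ldots aaab\cdot aaa\ldots$ from Proposition~\ref{currenttolamination}; and your empirical-measure construction for the positive half of~(3) is the counting-function argument of Lemma~\ref{lem:pseudosur}, recast as a limit of normalized rational currents.

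One point needs straightening out in~(4). Your portmanteau step correctly yields that every leaf of $\supp(\mu)$ is approximated by leaves of the $\supp(\mu_n)$, i.e.\ that $\supp(\mu)$ is a sublamination of any accumulation point $L$ of $(\supp(\mu_n))_n$ --- and this is precisely what the paper establishes in Lemma~\ref{lem:pseudocontinuous}. It does \emph{not} give the reverse inclusion you then assert; indeed your own non-continuity example $\mu_n=\mu+\tfrac{1}{n}\nu$ produces the constant sequence $\supp(\mu_n)=\supp(\mu)\cup\supp(\nu)$, whose sole accumulation point is strictly larger than $\supp(\mu)$. (The wording of item~(4) in the theorem is itself infelicitous on this point; as the reference to Lemma~\ref{lem:pseudocontinuous} makes clear, the intended content is the inclusion $\supp(\mu)\subseteq L$.) Finally, Lemma~\ref{lem:pseudosur} belongs to part~(3), not part~(4).
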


\medskip

Summing up, we believe that the results presented in this paper can be
interpreted as follows:

\smallskip

On the one hand, the complete correspondence between small $\R$-trees
and measured laminations, as known from the surface situation, does
not fully extend to the world of free groups, very small $\R$-tree
actions and currents. Unexpected degenerations seem to occure almost
as a rule, and much further research is needed before one can speak of
a ``true understanding''.

\smallskip

On the other hand, the spaces of currents, of $\R$-tree actions, and
of algebraic laminations for $\FN$ are naturally related, and although
this relationship is more challenging than in the surface case, there
is clearly enough interesting structure there to justify further
research efforts. A small such further contribution has already been
given, in \cite{chl2}, where algebraic laminations where used to
characterize $\R$-trees up to $\FN$-equivariant variations of their
metric.

\bigskip

\begin{acknowledgements}
This paper originates from a workshop organized at the CIRM in April
05, and it has greatly benefited from the discussions started there
and continued around the weekly Marseille seminar ``Teichm\"uller''
(partially supported by the FRUMAM).
\end{acknowledgements}

\section{Currents on $\FN$}
\label{subsec:defcurr}

Let $\CA$ be a basis of the free group $\FN$ or finite rank $N \geq
2$, and let $F(\CA)$ denote the set of finite reduced words in
$\CA^{\pm1}$, which is as usually identified with $\FN$.  A {\em
geodesic current} for a free group $\FN$ can be defined in various
ways.  In particular, there are the following three equivalent ways to
introduce them:

\smallskip
\noindent
{\bf I. Symbolic dynamist's choice:} Consider the space $\Sigma_{\CA}$
of biinfinite reduced indexed words $Z = \ldots z_{i-1} z_{i} z_{i+1}
\ldots$ in $\CA^{\pm 1}$, provided with the product topology, the
shift operator $\sigma: \Sigma_{\CA} \to \Sigma_{\CA}$, and with the
involution $Z \mapsto Z\inv$, see \cite{chl1-I}.  A geodesic current
is a non-trivial $\sigma$-invariant finite Borel measure $\mu$ on
$\Sigma_{\CA}$. We also require that $\mu$ is {\em symmetric}: the
measure is preserved by the involution of $\Sigma_{\CA}$ given by the
inversion $Z \mapsto Z\inv$.

\smallskip
\noindent
{\bf II. Geometric group theorist's choice:} Consider the space
$\partial^{2} \FN$ of pairs $(X, Y)$ of boundary points $X \neq Y \in
\partial \FN$, endowed with the ``product'' topology, with the
canonical diagonal action of $\FN$, and with the flip involution $(X,
Y) \mapsto (Y, X)$ as specified in \cite{chl1-I}.  A geodesic current
is a non-trivial $\FN$- and flip-invariant Radon measure $\mu$ on
$\partial^{2} \FN$, i.e. a Borel measure that is finite on any compact
set.

\smallskip
\noindent
{\bf III. Combinatorist's choice:} A geodesic current is given by a
non-zero function $\mu: \FN = F({\cal A}) \to \R_{\geq 0}$ with
$\mu(w\inv) = \mu(w)$ for all $w \in F(\CA)$, which satisfies the left
and the right {\em Kolmogorov property}: For all reduced words $w =
y_{1} \ldots y_{k} \in F({\cal A})$ one has 
\[\mu(w) = \sum_{y \in
{\cal A} \cup {\cal A}^{-1} \smallsetminus \{y_{k}^{-1}\}} \mu(w y) =
\sum_{y \in {\cal A} \cup {\cal A}^{-1} \smallsetminus \{y_{1}^{-1}\}}
\mu(y w).
\]

\bigskip

This three viewpoints correspond to the three equivalent definitions
given in \cite{chl1-I} of a lamination for the free group $\FN$.  We
assume some familiarity of the reader with these three settings and
will freely consider that a lamination is altogether symbolic
(viewpoint~I), algebraic (viewpoint~II) and, a laminary language
(viewpoint~III).  Whenever necessary, we emphasize the
particular viewpoint used, by notationally specifying the lamination
$L$ in question as symbolic lamination $L_{\CA}$, algebraic lamination
$L^{2}$, or as laminary language $\LL$ respectively.

For currents, the transition between the three viewpoints is also
canonical (see \cite{kapo1}), and we will freely move from one to the
other without always notifying the reader. To be specific, the
Kolmogorov value $\mu(w)$ of a reduced word $w = y_{1} \ldots y_{k}
\in F({\cal A})$, from the viewpoint~III, is precisely the measure of
the {\em cylinder}
\[ 
C_{\CA}(w) = \{\ldots z_{i-1} z_{i} z_{i+1}\ldots \mid z_{1} =
y_{1}, \ldots, z_{k} = y_{k} \} \subset \Sigma_{\CA}
\] 
from viewpoint I, and also, corresponding to viewpoint~II, equal to
the measure of the {\em algebraic cylinder} $C_{\CA}^{2}(w) \subset
\partial^{2}\FN$ given by
\[ 
\{(X, wX') \mid X =x_{1} x_{2} \ldots, X' = x'_{1} x'_{2} \ldots
\in \partial F({\cal A}), x_{1} \neq y_{1}, x'_{1} \neq y_{k}^{-1}\} \
.
\] 
Note that the algebraic cylinder $C_{\CA}^{2}(w)$ is the image of
the ``symbolic" cylinder $C_{\CA}(w)$ under the map $\Sigma_{\CA} \to
\partial^{2}\FN, \, Z = Z_{-} \cdot Z_{+} \mapsto (Z_{-}\inv, Z_{+})$.

\begin{rem}
\label{rem:cylinderbasischange}
The reader should notice that in viewpoints~I and III a basis $\cal A$
of $\FN$ is crucially used, while II is ``algebraic''. It is very
important to remember that basis change induces on the Kolmogorov
function a more complicated operation than just rewriting the given
group element in the new basis $\cal B$. The correct transition is
given, for any reduced word $w \in F({\cal B})$, by decomposing the
algebraic cylinder $C_{\CB}^{2}(w) \subset \partial^{2}\FN$ into a
finite disjoint union of translates $u_{i} C_{\CA}^{2}(v_{i})$ of
properly chosen algebraic cylinders $C_{\CA}^{2}(v_{i})$, with $u_{i},
v_{i} \in F({\cal A})$, and posing:
\[
\mu(w) = \sum_{i} \mu(v_{i}).
\]
\end{rem}

\medskip

Similarly as for laminations (see \S{}1 of \cite{chl1-I}), every
element $w$ of $\FN\smallsetminus\{ 1 \}$ (or rather, every
non-trivial conjugacy class) defines an {\em integer} current
$\mu_{w}$, given (in the language of viewpoint I) as follows: If $w =
u^m$ for the maximal exponent $m\geq 1$, then the measure $\mu_{w}(C)$
of any measurable set $C \subset \Sigma_{\CA}$ is equal to $m$ times
the number of elements of $C \cap L_{\CA}(u)$, where $L_{\CA}(u)$ is
the finite set of biinfinite words of type $\ldots v v \cdot v v
\ldots$, and $v \in F(\CA)$ is any of the cyclically reduced words
conjugated to $u$ or to $u\inv$.  Alternatively, (in the language of
viewpoint~II) the current $\mu_{w}$ is given by an $\FN$-equivariant
Dirac measure $\mu_{w}$ on $\partial^{2}\FN$, defined as follows: For
every measurable set $C^{2} \subset \partial^{2} \FN$ the value of
$\mu_{w}(C^{2})$ is given by the number of cosets $g < w > \subset
\FN$ which contain an element $v$ that satisfies $v(w^{-\infty},
w^\infty) \in C^{2}$ or $v(w^{\infty}, w^{-\infty}) \in C^{2}$.  A
third equivalent definition of $\mu_{w}$ (corresponding to
viewpoint~III) is given by a count of ``frequencies'', see
\cite{kapo2}. The noteworthy fact that $\mu_{w}$ depends only on the
element $w \in \FN$ and not on the word $w \in F(\CA)$ is obvious in
the second of these definitions, but rather puzzeling if one considers
only the first or the third.

A current is {\em rational} if it is a non-negative linear combination
of finitely many integer currents.  

\begin{rem}
\label{rem:straightforward}
The above setup of the concept of currents in its various equivalent
forms, together with the canonical identification $\FN = F({\cal A })$
for any basis ${\cal A}$ of $\FN$, provides the ideal means to see
very elegantly that many of the classical measure theoretic tools from
symbolic dynamics do not depend on the underlying combinatorics of the
chosen alphabet, but are rather algebraic in their true nature.
Determining the exact point to which ergodic theory tools can be
``algebraicized'' seems to be a worthy task but goes beyond the scope
of this paper.
\end{rem}

\section{The space $\Curr(\FN)$}
\label{subsec:spacecurr}

The set of currents on $\FN$ will be denoted $\Curr(\FN)$. It comes
naturally with several interesting structures, which we will discuss
briefly in this section.  We would like to stress that this space, as
well as its projectivization, appears to be a very interesting and
useful tool for many fundamental questions about automorphisms of free
groups, and we expect that it will play an important role in the
future developpement of this subject.

\smallskip

First, the set $\Curr(\FN)$ of currents carries the weak topology,
which for any basis $\CA$ of $\FN$ is induced by the canonical
embedding of $\Curr(\FN)$ into the vector space $\R^{F({\cal A})}$,
given by $\mu \mapsto (\mu(w))_{w \in F(\CA)}$.  In particular, a
family of currents $\mu_{i}$ converges towards a current $\mu \in
\Curr(\FN)$ if and only if $\mu_{i}(w)$ converges to $\mu(w)$ for
every $w \in F({\cal A})$.

\smallskip

Next, the same formalism as explained in
Remark~\ref{rem:cylinderbasischange} for a basis change defines
canonically an action by homeomorphisms of $\Out(\FN)$ on the space
$\Curr(\FN)$, which is formally given, for any $\alpha \in \Aut(\FN)$
and any $\mu \in \Curr(\FN)$, by $\alpha_{*}(\mu) (C) = \mu(\alpha\inv
(C))$, for every measurable set $C \subset \partial^{2}\FN$.  This
convention defines a left action of $\Out(\FN)$:
\[
\alpha_{*}(\beta_{*}(\mu)) (C) = \beta_{*}(\mu)(\alpha\inv (C))
= \mu(\beta\inv(\alpha\inv (C)))
\]
\[ 
= \mu((\alpha \beta)\inv (C)) = (\alpha \beta)_{*}(\mu)(C)
\] 
For any integer current $\mu_{w}$, with $w \in \FN\smallsetminus \{1
\}$, this gives (compare \cite{kapo2,kapo1}):
\[
\alpha_{*}(\mu_{w}) = \mu_{\alpha(w)}
\]

\smallskip

Every current $\mu$ defines naturally a lamination $L(\mu)$ for the
free group $\FN$.  $L(\mu)$ can be viewed as an algebraic lamination
$L^{2}(\mu)$, i.e. a non-empty subset of $\partial^{2} \FN$ which is
closed and invariant under the $\FN$-action and the flip involution,
compare \cite{chl1-I}.  In this setting, $L^2(\mu) \subset
\partial^{2}\FN$ is simply the support $\supp (\mu)$ of the Borel
measure $\mu$ on $\partial^{2}\FN$, i.e. the complement of the biggest
open set (= the union of all open sets) with measure 0.
Alternatively, $L(\mu)$ is given via its laminary language $\LL(\mu) =
\{ w \in F({\cal A}) \mid \mu(w) > 0 \}$.  We refer to a current $\mu$
with support contained in a lamination $L$ simply as {\em an invariant
measure} on $L$. Alternatively, one says that $\mu$ is {\em carried
by} $L$.

\smallskip

A lamination $L$ which has, up to scalar multiples, only one current
$\mu$ with support $L(\mu) = L$, is called {\em uniquely ergodic}. The
simplest examples of non-uniquely ergodic laminations are given by the
union $L$ of two disjoint laminations $L_{0}$ and $L_{1}$, such that
$L_{0}$ and $L_{1}$ are the support of currents $\mu_{0}$ and
$\mu_{1}$ respectively (for example rational laminations $L_{0} =
L(a)$ and $L_{1} = L(b)$ for distinct basis elements $a, b \in \CA$).
For $0 < \lambda < 1$ one obtains an interval of pairwise projectively
distinct currents
\[ 
\mu(\lambda) =\lambda \mu_{1} + (1- \lambda) \mu_{0}\, ,
\]
all with support $L$.

\begin{prop}
\label{currenttolamination}
Recall that we assume $N \geq 2$, and let $\Lambda(\FN)$ denote the
space of laminations for $\FN$ as introduced in \cite{chl1-I}. The map
\[
\supp :  \Curr(\FN) \to \Lambda(\FN), \, \,
\mu \mapsto L(\mu)
\]
is $\Out(\FN)$-equivariant, but not continuous 
and not surjective.
\end{prop}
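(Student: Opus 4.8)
The plan is to establish the three assertions of Proposition~\ref{currenttolamination} separately, each by exhibiting the relevant structural feature with a concrete example or a short direct argument.

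\textbf{Equivariance.} First I would verify that $\supp$ is $\Out(\FN)$-equivariant. This is essentially bookkeeping: for $\alpha \in \Aut(\FN)$ and $\mu \in \Curr(\FN)$, the action is defined by $\alpha_*(\mu)(C) = \mu(\alpha\inv(C))$ for measurable $C \subset \partial^2\FN$, while the $\Out(\FN)$-action on $\Lambda(\FN)$ (as recalled from \cite{chl1-I}) is $\alpha_*(L^2) = \alpha(L^2)$, the image of $L^2 \subset \partial^2\FN$ under the homeomorphism of $\partial^2\FN$ induced by $\alpha$. Since a homeomorphism carries the complement of the largest $\mu$-null open set to the complement of the largest $\alpha_*(\mu)$-null open set, one gets $\supp(\alpha_*(\mu)) = \alpha(\supp(\mu))$ directly. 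I would also remark that this is consistent with the already-noted identity $\alpha_*(\mu_w) = \mu_{\alpha(w)}$, since $\supp(\mu_w) = L^2(w)$ and $\alpha(L^2(w)) = L^2(\alpha(w))$.

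\textbf{Non-continuity.} For failure of continuity, the natural strategy is to produce a sequence of rational currents $\mu_{w_n}$ converging to some current $\mu$ whose support is strictly larger than all the $\supp(\mu_{w_n})$ — indeed strictly larger than even the limit/accumulation of those supports. A standard choice: take a basis element $a \in \CA$ and let $w_n = a^n$, suitably renormalized; the normalized integer currents $\frac{1}{n}\mu_{a^n}$ converge to the "uniform" current on the lamination $L(a)$, whose support is just $L^2(a)$, so that is too tame. A better route is to pick $w_n$ so that the conjugacy classes $w_n$ "fill up" more and more of $\partial^2\FN$ while each individual $\supp(\mu_{w_n}) = L^2(w_n)$ remains a finite (hence very thin) lamination; a sequence of primitive elements whose cyclic words converge, after normalization, to a current of full support — e.g. built from longer and longer initial segments of a fixed generic biinfinite word — will have $\supp(\mu_{w_n})$ finite for each $n$ but the limiting current will have support a Cantor-type lamination strictly containing every $L^2(w_n)$ and their union. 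Then $\supp$ cannot be continuous, since the supports do not converge to the support of the limit in $\Lambda(\FN)$.

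\textbf{Non-surjectivity.} Finally, for non-surjectivity I would exhibit a lamination $L \in \Lambda(\FN)$ carrying no invariant measure at all, i.e. not in the image of $\supp$. The cleanest candidate is the diagonal closure of an orbit — for instance the lamination $L^2 = \overline{\FN \cdot (a^{-\infty}, b^\infty)}$ generated by a single non-periodic leaf that is not "recurrent" in the right sense, or more concretely a lamination containing an isolated leaf that is not a periodic ($\mu_w$-type) leaf: by the Kolmogorov property a current cannot be supported on a leaf that does not "come back", so such an $L$ supports no current whose support is all of $L$. The simplest rigorous version: take $L$ to be the union of $L^2(a)$ with one extra $\FN$-orbit of a biinfinite word asymptotic to $a^{\pm\infty}$ at both ends but not equal to $(a^{-\infty},a^\infty)$; any current supported in $L$ must, by Kolmogorov balancing of cylinder measures, give that extra orbit measure zero, so its support is contained in $L^2(a) \subsetneq L$. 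Hence $L \notin \operatorname{im}(\supp)$.

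\textbf{Main obstacle.} I expect the genuinely delicate point to be the non-continuity argument: one must choose the approximating conjugacy classes $w_n$ carefully and track the normalized Kolmogorov functions $\frac{1}{|w_n|}\mu_{w_n}(v)$ for all words $v$, showing simultaneously that these converge (so a limit current exists — compactness of $\Pr\Curr(\FN)$ helps, but identifying the limit's support requires control) and that the limit has support strictly larger than $\bigcup_n L^2(w_n)$. Getting a clean, checkable example here — rather than a vague appeal — is the crux; the equivariance is routine and non-surjectivity follows from a short Kolmogorov-property computation once the right $L$ is written down.
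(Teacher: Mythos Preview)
Your treatment of equivariance and of non-surjectivity is fine and matches the paper: the paper also exhibits a lamination $L$ containing a single non-recurrent leaf (concretely $Z=\ldots aaab\cdot aaa\ldots$) and checks via the Kolmogorov identities that any current carried by $L$ must vanish on every word containing $b$, hence has support strictly inside $L$. Your sketch with a leaf asymptotic to $a^{\pm\infty}$ is the same mechanism.

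The non-continuity argument, however, has a real problem, and it is precisely the part you flag as the ``main obstacle''. You propose to build $\mu_{w_n}\to\mu$ so that $\supp(\mu)$ is \emph{strictly larger} than the limit of the $\supp(\mu_{w_n})$. But this direction is impossible: if $\mu_k\to\mu$ and $w\in\LL(\mu)$, then $\mu(w)>0$ forces $\mu_k(w)>0$ for all large $k$, so $w$ lies in $\LL(\mu_k)$ eventually and hence in the laminary language of any accumulation point of the $L(\mu_k)$. In other words $\supp(\mu)$ is always a \emph{sublamination} of any limit of the $\supp(\mu_k)$ (this is exactly the content of Lemma~\ref{lem:pseudocontinuous}). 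Discontinuity can therefore only be witnessed by the limit of supports being strictly \emph{larger} than the support of the limit, not the other way round; your proposed inequality goes the wrong way, and your generic-word construction, even if it converges, will at best give equality or the opposite containment.

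The paper sidesteps all of this with a one-line example that you overlooked: take two currents $\mu_0,\mu_1$ with disjoint supports $L_0,L_1$ (e.g.\ $\mu_a$ and $\mu_b$) and set $\mu(\lambda)=\lambda\mu_1+(1-\lambda)\mu_0$. For every $\lambda\in(0,1)$ one has $\supp(\mu(\lambda))=L_0\cup L_1$, constant in $\lambda$, while $\mu(\lambda)\to\mu_0$ as $\lambda\to 0$ with $\supp(\mu_0)=L_0\subsetneq L_0\cup L_1$. No estimates, no identification of a limit current, no subsequence argument. So the part you thought was the crux is in fact the cheapest of the three.
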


\begin{proof}
The $\Out(\FN)$-equivariance is a direct consequence of the definition
of the action of $\Out(\FN)$ on $\Curr(\FN)$ and on $\Lambda(\FN)$.

To see that the map $\supp$ is non-surjective it suffices to consider
the symbolic lamination $L = L_{\{a, b\}}(Z)$ generated by the
biinfinite word $Z = \ldots a a a b \cdot a a a \ldots$.  It consists
of the $\sigma$-orbit of $Z$ and of the periodic word $\ldots a a
\cdot a a \ldots$, as well as of their inverses.  However, it is an
easy exercise to show that any Kolmogorov function $\mu$ on the
associated laminary language ${\cal L}_{\{a, b\}}(Z)$, as it takes on
values in $\R_{\geq 0}$ and not in $\R_{\geq 0} \cup \{\infty \}$,
must associate the value $0$ to any word that contains the letter $b$,
so that all the measure of $\mu$ will be concentrated on the
sublamination $L_{\{a, b\}}({a})$ of $L$.

The fact that the map $\supp$ is non-continuous can be seen from the
above defined family $\mu(\lambda)$ of currents with constant support
$L$, by letting the parameter $\lambda$ converge inside the open
interval $(0, 1)$ to the value 0 (or 1): For any such $\lambda$ the
support of $\mu(\lambda)$ is clearly the union $L_{0} \cup L_{1}$,
while for the limit one gets $L(\mu(0)) = L_{0}$ (or $L(\mu(1)) =
L_{1}$).  \qquad ${}$
\end{proof}

The space $\Curr(\FN)$ has some additional structures which are not
matched by corresponding structures in $\Lambda(\FN)$. For
example, there is a canonical linear structure on $\Curr(\FN)$, given
simply by the embedding of $\Curr(\FN)$ into the real vector space
$\R^{F({\cal A})}$.  Projectivization $\mu \mapsto [\mu]$ defines the
space of {\em projectivized currents} $\Pr \Curr(\FN)$. Both
$\Curr(\FN)$ and its projectivization are infinite dimensional, but
$\Pr \Curr(\FN)$ is compact. Clearly, the map $\supp$ splits over the
projectivization, thus inducing a map $\Pr\supp: \Pr \Curr(\FN) \to
\Lambda(\FN)$, which by Proposition~\ref{currenttolamination} is
$\Out(\FN)$-equivariant, non-continuous, and non-surjective.  We
obtain furthermore

\begin{lem}
\label{lem:noninjective}
The map $\Pr\supp: \Pr \Curr(\FN) \to \Lambda(\FN)$ is
non-injective.
\end{lem}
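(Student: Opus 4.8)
The plan is to exhibit two projectively distinct currents that have the same support, which immediately shows $\Pr\supp$ fails to be injective. The natural source of such a pair is precisely the non-uniquely ergodic lamination already discussed just before the statement: take distinct basis elements $a, b \in \CA$ and consider the rational integer currents $\mu_a = \mu_{(a)}$ and $\mu_b = \mu_{(b)}$, with respective supports the periodic laminations $L(a) = L_{\CA}(\ldots a a \cdot a a \ldots)$ and $L(b) = L_{\CA}(\ldots b b \cdot b b \ldots)$. These two laminations are disjoint closed $\FN$- and flip-invariant subsets of $\partial^2\FN$, so their union $L = L(a) \cup L(b)$ is again a lamination in $\Lambda(\FN)$.

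The key step is to produce currents with support exactly $L$. For this I would use the one-parameter family already written in the excerpt: for $0 < \lambda < 1$ set $\mu(\lambda) = \lambda\mu_b + (1-\lambda)\mu_a$. Since $\mu_a$ and $\mu_b$ are each non-negative and the supports are disjoint, the support of $\mu(\lambda)$ is $\supp(\mu_a) \cup \supp(\mu_b) = L$, independently of $\lambda \in (0,1)$; hence $\Pr\supp([\mu(\lambda)]) = L$ for every such $\lambda$. On the other hand, two such currents $\mu(\lambda)$ and $\mu(\lambda')$ are not positive scalar multiples of one another whenever $\lambda \neq \lambda'$: evaluating the Kolmogorov functions at the words $a$ and $b$ gives $\mu(\lambda)(a) = (1-\lambda)\mu_a(a)$, $\mu(\lambda)(b) = \lambda\mu_b(b)$, and the ratio $\mu(\lambda)(b)/\mu(\lambda)(a)$ is a strictly monotone function of $\lambda$, so it takes different values at $\lambda$ and $\lambda'$. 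Therefore $[\mu(\lambda)] \neq [\mu(\lambda')]$ in $\Pr\Curr(\FN)$, yet both are sent by $\Pr\supp$ to the same lamination $L$.

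The only point requiring a small argument — and the closest thing to an obstacle here, though it is routine — is the verification that $\supp(\mu_a + \mu_b) = \supp(\mu_a) \cup \supp(\mu_b)$, i.e. that no cancellation or interaction can shrink the support. This is immediate from the fact that both measures are non-negative Borel (equivalently, Radon on $\partial^2\FN$): a point lies outside the support of a sum of non-negative measures precisely when it has a neighbourhood of measure zero for the sum, which forces it to have measure-zero neighbourhoods for each summand. So $\supp$ of the sum is the union of the two supports, and the laminations $L(a)$, $L(b)$ being distinct and disjoint guarantees $L$ is a genuine union of two non-trivial pieces. This completes the proof that $\Pr\supp$ is non-injective; in fact it shows more, namely that $\Pr\supp$ collapses an entire projective interval to a single point.

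\begin{proof}
Pick distinct basis elements $a, b \in \CA$, and let $\mu_{a} = \mu_{(a)}$ and $\mu_{b} = \mu_{(b)}$ be the associated integer currents, with supports the disjoint periodic laminations $L(a)$ and $L(b)$. For $0 < \lambda < 1$ set $\mu(\lambda) = \lambda \mu_{b} + (1-\lambda)\mu_{a}$. Since $\mu_{a}$ and $\mu_{b}$ are non-negative Radon measures on $\partial^{2}\FN$, a point of $\partial^{2}\FN$ has a neighbourhood of $\mu(\lambda)$-measure zero if and only if it has a neighbourhood of measure zero for both $\mu_{a}$ and $\mu_{b}$; hence
\[
\supp(\mu(\lambda)) = \supp(\mu_{a}) \cup \supp(\mu_{b}) = L(a) \cup L(b) =: L,
\]
independently of $\lambda$. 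Thus $\Pr\supp([\mu(\lambda)]) = L$ for all $\lambda \in (0,1)$. On the other hand, evaluating Kolmogorov functions at $a$ and at $b$ gives $\mu(\lambda)(a) = (1-\lambda)\mu_{a}(a)$ and $\mu(\lambda)(b) = \lambda\mu_{b}(b)$, so the ratio $\mu(\lambda)(b)/\mu(\lambda)(a)$ is strictly monotone in $\lambda$. Hence for $\lambda \neq \lambda'$ the currents $\mu(\lambda)$ and $\mu(\lambda')$ are not positive scalar multiples of one another, so $[\mu(\lambda)] \neq [\mu(\lambda')]$ while $\Pr\supp([\mu(\lambda)]) = \Pr\supp([\mu(\lambda')]) = L$. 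Therefore $\Pr\supp$ is not injective.
\end{proof}
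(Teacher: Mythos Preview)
Your proof is correct and follows exactly the same approach as the paper: you use the family $\mu(\lambda) = \lambda\mu_b + (1-\lambda)\mu_a$ introduced just before the lemma to exhibit projectively distinct currents with common support $L = L(a) \cup L(b)$. The paper's own proof is a one-sentence reference to this same family, so your version simply fills in the details (support of a sum of non-negative measures, projective distinctness via Kolmogorov values) that the paper leaves implicit.
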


\begin{proof}
Any non-uniquely ergodic lamination, in particular the above defined
family $\mu(\lambda)$ of currents with constant support $L$, shows
that the map $\Pr\supp$ is not injective.
\end{proof}

A second interesting example for the non-continuity of the map
$\supp$, other than the one given in the proof of
Proposition~\ref{currenttolamination}, is given by the rational
currents $\frac{1}{n}\mu_{a b^n}$ which converge to $\mu_{b}$, while
their support $L(a b^n)$ converge to the lamination generated by
$\ldots b b a \cdot b b \ldots$ and $\ldots b b \cdot b b \ldots$,
which is strictly larger than the lamination $L(b)$.

\smallskip

This last example, as also the one given in the proof of
Proposition~\ref{currenttolamination}, indicates that a weaker
statement than the continuity might be true for the map $\supp$.
Since this will be needed in \S\ref{subsec:dualmetric} as an important
ingredient for the proof of Proposition~\ref{prop:invtreegen}, we
formalize it here:

\medskip

We say that a subset $\delta$ of $\Lambda(\FN)$ is {\em saturated}
if $\delta$ contains with any lamination also 
all of its sublaminations.

\begin{lem}
\label{lem:pseudocontinuous}
Let $\delta \subset \Lambda(\FN)$ be a closed saturated subset of
laminations.  Then the full preimage $\Delta \subset
\Pr\Curr(\FN)$ of $\delta$ under the map $\Pr\supp$ is closed.
\end{lem}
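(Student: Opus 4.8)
The plan is to show that $\Delta = (\Pr\supp)^{-1}(\delta)$ is sequentially closed, which suffices since $\Pr\Curr(\FN)$ is metrizable (it embeds in $\R^{F(\CA)}$). So let $([\mu_n])_{n \in \N}$ be a sequence in $\Delta$ converging to some $[\mu] \in \Pr\Curr(\FN)$; after rescaling the representatives we may assume $\mu_n \to \mu$ in $\Curr(\FN)$, i.e. $\mu_n(w) \to \mu(w)$ for every $w \in F(\CA)$. We must show $[\mu] \in \Delta$, that is, $L(\mu) = \supp(\mu) \in \delta$.

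The key step is to compare the support of the limit with the supports of the $\mu_n$ at the level of laminary languages. First I would observe that $\LL(\mu) = \{w \in F(\CA) : \mu(w) > 0\}$ and similarly for each $\mu_n$. The crucial elementary fact is a \emph{semicontinuity of support}: if $w \in \LL(\mu)$, i.e. $\mu(w) > 0$, then since $\mu_n(w) \to \mu(w) > 0$ we have $\mu_n(w) > 0$ for all large $n$, hence $w \in \LL(\mu_n)$ eventually. In algebraic terms this says that for every $w \in \LL(\mu)$ the algebraic cylinder $C_{\CA}^2(w)$ meets $L^2(\mu_n)$ for all large $n$, so that $L^2(\mu) \subset \liminf_n L^2(\mu_n)$ in the sense of the Chabauty-type topology on $\Lambda(\FN)$; concretely, any finite subword of a leaf of $L(\mu)$ appears as a finite subword of a leaf of $L(\mu_n)$ for $n$ large. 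Passing to a subsequence if necessary, the sequence $L(\mu_n)$ accumulates at some lamination $L_\infty \in \Lambda(\FN)$, and the semicontinuity just described gives $L(\mu) \subset L_\infty$ is false in general (the wrong direction) — what we actually get directly is the reverse containment $L(\mu) \subseteq L_\infty$ need not hold; rather $L_\infty$ may be strictly larger, as the examples $\frac1n\mu_{ab^n} \to \mu_b$ preceding the lemma show. The right statement is: \emph{every accumulation point $L_\infty$ of $(L(\mu_n))$ satisfies $L(\mu) \subseteq L_\infty$}, which is exactly property (4) of Theorem~\ref{thmtwo} and is the substantive input I would invoke here.

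Granting that, the argument concludes quickly. Since each $[\mu_n] \in \Delta$ we have $L(\mu_n) \in \delta$. By compactness of $\Lambda(\FN)$ the sequence $(L(\mu_n))$ has an accumulation point $L_\infty$, and since $\delta$ is closed, $L_\infty \in \delta$. By the accumulation statement above, $L(\mu)$ is a sublamination of $L_\infty$. Finally, because $\delta$ is saturated, $L_\infty \in \delta$ forces $L(\mu) \in \delta$ as well. Hence $[\mu] \in \Delta$, and $\Delta$ is closed.

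I expect the main obstacle to be making rigorous the passage from convergence of Kolmogorov values $\mu_n(w) \to \mu(w)$ to a statement about the limiting behavior of the supports $L(\mu_n)$ in $\Lambda(\FN)$ — in particular proving that $L(\mu)$ is contained in every accumulation point of $(L(\mu_n))$. The one-sided direction ($w \in \LL(\mu) \Rightarrow w \in \LL(\mu_n)$ eventually) is the easy half; the point is that this one-sided control is \emph{exactly} what is needed, because saturatedness of $\delta$ lets us absorb the possible strict enlargement of the support in the limit. Organizing the proof so that Lemma~\ref{lem:pseudocontinuous} rests cleanly on part (4) of Theorem~\ref{thmtwo} (or on a self-contained version of that semicontinuity statement) is the main structural decision; the rest is routine point-set topology on the compact metrizable space $\Pr\Curr(\FN)$.
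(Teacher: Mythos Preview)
Your approach is essentially identical to the paper's: pass to a subsequence so that both $[\mu_n] \to [\mu]$ and $L(\mu_n) \to L$ converge, use the elementary observation that $\mu(w) > 0$ forces $\mu_n(w) > 0$ eventually (hence $w \in \LL(\mu_n)$ for large $n$, hence $w$ lies in the laminary language of the limit $L$), deduce that $L(\mu)$ is a sublamination of $L$, and conclude via closedness and saturation of $\delta$.

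Two remarks. First, do not invoke Theorem~\ref{thmtwo}(4): that theorem is stated in the introduction explicitly as a \emph{summary} of results proved later, including the present lemma, so citing it here is circular. The ``easy half'' you already isolated is the entire content needed --- from $w \in \LL(\mu_n)$ for all large $n$ one gets $w$ in the laminary language of $L = \lim L(\mu_n)$ directly (cylinders are compact, so pick points in $C_\CA(w) \cap L_\CA(\mu_n)$ and extract a convergent subsequence; its limit lies in $C_\CA(w) \cap L_\CA$). This is exactly what the paper does, with no external input. Second, your middle paragraph is garbled as written: you call the inclusion $L(\mu) \subset L_\infty$ ``false in general'' and then ``the right statement'' a few lines later. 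The correct inclusion is indeed $L(\mu) \subset L_\infty$; the example $\frac{1}{n}\mu_{ab^n} \to \mu_b$ shows it can be \emph{strict}, not that it fails. Clean this up and drop the appeal to Theorem~\ref{thmtwo}, and your argument coincides with the paper's.
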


\begin{proof}
We consider a sequence of currents $\mu_{k}$ in $\Curr(\FN)$, with
$L(\mu_{k}) \in \delta$ for any $\mu_{k}$. By the compactness of
$\Pr\Curr(\FN)$ and of $\Lambda(\FN)$ we can assume, after possibly
passing over to a subsequence, that there is a current $\mu \in
\Curr(\FN)$ and a lamination $L \in \Lambda(\FN)$ with $[\mu] =
\underset{k \to \infty}{\lim} [\mu_{k}]$ and $L = \underset{k \to
\infty}{\lim} L(\mu_{k})$. By properly normalizing the $\mu_{k}$ we
can actually assume that $\mu = \underset{k \to \infty}{\lim}
\mu_{k}$.

We now fix a basis $\CA$ of $\FN$ and consider the value of the
Kolmogorov function $\mu(w)$ for any $w \in \FN \ssm \{1\}$.  If
$\mu(w) > 0$, then by the topology on $\Curr(\FN)$, for any $\epsilon$
with $\mu(w) > \varepsilon > 0$ there is a bound $k_{0}$ such that for
any $k \geq k_{0}$ one has $\vbar \mu_{k}(w) - \mu(w) \vbar <
\varepsilon$.  This shows for all $k \geq k_{0}$ that $w$ belongs to
the laminary language $\LL(\mu_{k})$.  But this implies that $w$
belongs to the laminary language of $L$, which shows that $\mu$ is
carried by $L$. Since by hypothesis $\delta$ is closed and saturated,
this shows that $[\mu]$ is contained in $\Delta$, so that the latter
must be closed.
\end{proof}

\medskip

A weaker statement than the surjectivity of the map $\supp$ is
crucially used in \S\ref{subsec:dualmetric}, again in the proof of
Proposition~\ref{prop:invtreegen}:

\begin{lem}
\label{lem:pseudosur}
Every lamination $L \in \Lambda(\FN)$ contains a sublamination which
is the support of some current $\mu \in \Curr(\FN)$.
\end{lem}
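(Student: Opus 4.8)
The plan is to realize the desired sublamination as the support of a weak-$*$ limit of averaged counting currents along a sequence of longer and longer leaf segments of $L$. First I would fix a basis $\CA$ of $\FN$ and work with the laminary language $\LL = \LL(L) \subset F(\CA)$, which by hypothesis is non-empty; choose a nested exhausting sequence of reduced words $w_{1} \sqsubset w_{2} \sqsubset \cdots$ occurring in leaves of $L$, with $|w_{k}| \to \infty$, so that every word in $\LL$ is a subword of some $w_{k}$ (possible since $L$ is finitely generated as a lamination, or at least has a countable laminary language, and each sufficiently long leaf segment records all shorter ones up to translation). For each $k$ form the rational current $\mu_{k}' := \mu_{u_{k}}$ associated to a cyclically reduced element $u_{k}$ whose cyclic word contains $w_{k}$ as a long subword (for instance take $u_{k}$ to be the conjugacy class of $w_{k}$ itself when $w_{k}$ is cyclically reduced, or a bounded modification otherwise), normalize $\mu_{k} := \mu_{k}' / |w_{k}|$, and pass to a weak-$*$ convergent subsequence $\mu_{k} \to \mu$ using compactness of $\Pr\Curr(\FN)$; the normalization guarantees $\mu \neq 0$, because the Kolmogorov mass of $\mu_{k}$ on the cylinder of a fixed short prefix of $w_{1}$ stays bounded below.

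The key point is then to identify $\supp(\mu)$ with a sublamination of $L$. On the one hand, if $\mu(v) > 0$ for some $v \in F(\CA)$, then $\mu_{k}(v) > 0$ for all large $k$ by the topology on $\Curr(\FN)$, which means $v$ occurs in the cyclic word of $u_{k}$; since all but a bounded-length part of that cyclic word lies inside $w_{k}$, and $w_{k}$ is a leaf segment of $L$, a counting argument (the fraction of occurrences of $v$ that straddle the "seam" of the cyclic word is $O(|v|/|w_{k}|) \to 0$) forces $v$ to occur as a subword of some leaf of $L$, i.e. $v \in \LL(L)$. Hence $\supp(\mu) \subseteq L$. On the other hand $\supp(\mu)$ is by construction a non-empty closed $\FN$- and flip-invariant subset of $\partial^{2}\FN$, hence an algebraic lamination; and it is a sublamination of $L$ precisely because its laminary language is contained in $\LL(L)$. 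Thus $L_{0} := \supp(\mu)$ is the required sublamination.

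I expect the main obstacle to be the passage "$\mu(v) > 0 \Rightarrow v$ is a leaf segment of $L$" in the degenerate cases: one must be sure that the chosen words $w_{k}$ genuinely come from leaves of $L$ and not merely from the combinatorics of a single periodic approximation, and that the normalization by $|w_{k}|$ is the correct one so that the limit current is neither zero nor supported on spurious periodic leaves introduced by closing up $w_{k}$ into $u_{k}$. This is handled by observing that the boundary contributions are of lower order: the number of occurrences of a fixed word $v$ in the cyclic word of $u_{k}$ that are \emph{not} occurrences inside $w_{k}$ is bounded independently of $k$, so after dividing by $|w_{k}|$ they vanish in the limit, and what survives is exactly the asymptotic frequency statistics of subwords appearing inside leaves of $L$. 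A secondary technical point is the choice of the words $w_{k}$ when $\LL(L)$ is large or when $L$ has no periodic leaves at all; here one invokes that any lamination has leaves, picks a single leaf $\ell$, and lets $w_{k}$ run through an exhausting sequence of finite subwords of $\ell$, with the $u_{k}$ obtained by the standard "close up the segment with a bounded bridge" trick, which does not affect the argument above.
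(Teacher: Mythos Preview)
Your approach is essentially the paper's: take empirical frequency statistics of subwords along longer and longer segments of a fixed leaf, and pass to a limit to obtain a current supported in $L$. The paper's execution is slightly more direct: rather than closing each segment $w_{k}$ up into a conjugacy class $u_{k}$ to obtain a genuine rational current (and then having to argue that the ``seam'' contributions are negligible), it simply defines a counting function $m_{n}(w)$ equal to the number of occurrences of $w$ or $w^{-1}$ in the central segment $Z_{n}$ of a single leaf $Z$, divided by $2|Z_{n}|$, observes that these functions satisfy the Kolmogorov identities up to an error of order $1/|Z_{n}|$, and extracts a limit Kolmogorov function by a diagonal argument. This sidesteps your bridge construction entirely, and makes the inclusion $\supp(\mu)\subset L$ immediate since $m_{n}(w)>0$ only for subwords of $Z$. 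Two small points in your write-up: the claim that ``every word in $\LL$ is a subword of some $w_{k}$'' is neither true in general (a single leaf need not see the whole laminary language unless $L$ is minimal) nor needed for the conclusion; and your nonvanishing argument via ``a fixed short prefix of $w_{1}$'' can fail, since that particular prefix may well have asymptotic frequency zero along the chosen leaf --- the robust argument, which the paper uses, is that the total Kolmogorov mass on words of length $1$ stays equal to $1$ by construction.
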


\begin{proof}
For some basis $\CA$ of $\FN$, let $Z = \ldots z_{i-1} z_{i} z_{i+1}
\ldots$ be a leaf of the lamination $L$. Let $Z_{n} = z_{-n} \ldots
z_{n}$ be the central subword of $Z$ of length $2n + 1$.

For every $n \in \N$ we define a ``counting function'' $m_{n}: F(\CA)
\to \R_{\geq 0}$, by setting, for any word $w$ in $F(\CA)$, $m_{n}(w)$
to be the number of occurences of $w$ as subword of $Z_{n}$ or of
$Z_{n}\inv$, divided by $4n+2$.  It follows directly that $m_{n}$
satisfies the equations that defines the right and the left Kolmogorov
property, up to possibly an error of absolute value less than
$\frac{1}{2n+1}$. The total value of $m_{n}$ on the set of words of
length $1$ is $1$, for any $n \in \N$. Moreover $m_n(w)$ is non-zero
only for subwords of $Z$.

For each word $w$ in $F(\CA)$ we can chose a subsequence of
$(m_n)_{n\in\N}$ whose value at $w$ converges. By a diagonal argument
we get a subsequence that converges pointwise to a limit function
$\mu$ which satisfies the Kolmogorov laws while still having total
value $1$ on set of words of length $1$, so that it is non-zero.

By construction, we have $m_{n}(w) = m_{n}(w\inv)$ for all $w \in
F(\CA)$, so that the same is true for $\mu$. Hence $\mu$ is a current.
Its support is contained in the set of subwords of $Z$ and thus, as a
lamination, in $L$.
\end{proof}

A very interesting subspace ${\cal M} \subset \Pr \Curr(\FN)$ has been
introduced by R.~Martin in \cite{mart} as closure of the
$\Out(\FN)$-orbit of $[\mu_{a}]$, for any element $a$ of any basis
${\cal A}$ of $\FN$. R. Martin shows that a projectivized integer
current $[\mu_{w}]$ belongs to ${\cal M}$ if and only if $w$ is
contained in a proper free factor of $\FN$.  In contrast to the
analogous situation for $\overline{\Out(\FN) L(a)}$ (compare
Proposition~8.1 of \cite{chl1-I}), for $N \geq 3$ it has been shown in
\cite{kl}, Theorem B, that $\cal M$ is the unique minimal subspace of
$\Curr(\FN)$ which is non-empty, closed and $\Out(\FN)$-invariant.

\smallskip

The fact that currents behave somehow more friendly than laminations
is underlined by the following fact, proved in R.~Martin's thesis and
attributed there to M.~Bestvina (compare to Proposition~6.5 of 
\cite{chl1-I}):

\begin{prop}[\cite{mart}]
\label{prop:rationalcurr}
The set of projectivized integer currents $[\mu_{w}]$, for any $w
\in \FN$, is dense in
$\Pr\Curr(\FN)$.
\end{prop}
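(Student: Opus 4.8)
The plan is to realise an arbitrary current $\mu$ as a limit of rescaled integer currents $\frac{1}{n}\mu_{w_{n}}$ and then to pass to the projectivisation. Fix once and for all a basis $\CA$ of $\FN$. By the description of the weak topology on $\Curr(\FN)$ recalled in \S\ref{subsec:spacecurr} --- a family of currents converges precisely when all of its Kolmogorov values converge --- together with a diagonal argument over the exhaustion of $F(\CA)$ by the finite sets $B_{L} = \{w \in F(\CA) : |w| \le L\}$, it suffices to establish the following finite approximation statement: for every $L \ge 2$ and every $\varepsilon > 0$ there are $w \in \FN \ssm \{1\}$ and a scalar $c > 0$ with $|c\,\mu_{w}(v) - \mu(v)| < \varepsilon$ for all $v \in B_{L}$. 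Indeed, applying this with $L = k$ and $\varepsilon = 1/k$ yields words $w_{k}$ and scalars $c_{k}$ with $c_{k}\mu_{w_{k}} \to \mu$ pointwise on $F(\CA)$, hence $[\mu_{w_{k}}] \to [\mu]$ in $\Pr\Curr(\FN)$, and $[\mu]$ was arbitrary.

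To produce such a $w$, I would work in the de Bruijn graph $\Gamma_{L}$ whose vertices are the reduced words of length $L-1$ in $\CA^{\pm 1}$ and which has, for each reduced word $e = y_{1}\cdots y_{L}$, one edge labelled $e$ running from the vertex $y_{1}\cdots y_{L-1}$ to the vertex $y_{2}\cdots y_{L}$. The left and right Kolmogorov identities applied to words of length $L-1$ say exactly that at each vertex $u$ one has $\sum_{e\text{ into }u}\mu(e) = \mu(u) = \sum_{e\text{ out of }u}\mu(e)$; thus $(\mu(e))_{e}$ is a non-negative circulation on $\Gamma_{L}$, and every shorter Kolmogorov value $\mu(v)$ with $|v| < L$ is recovered from it by summing over the length-$L$ extensions of $v$. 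The non-negative circulations on the finite graph $\Gamma_{L}$ form a rational polyhedral cone, so I can pick a \emph{rational} non-negative circulation $(x_{e})_{e}$ with $|x_{e} - \mu(e)| < \delta$ for every edge $e$, where $\delta$ is to be fixed later; clearing denominators, $(n x_{e})_{e}$ is, for a suitable $n \in \N$, a non-negative integer circulation on $\Gamma_{L}$, i.e.\ an edge-weighting that is balanced at every vertex.

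The decisive point is to realise this integer circulation by a \emph{single} closed walk. Its support subgraph is balanced but typically disconnected --- already for $\mu = \mu_{a}$ the inversion symmetry forces at least two components. Since $\Gamma_{L}$ is strongly connected (it is the Rauzy graph of the irreducible subshift of finite type $\Sigma_{\CA}$), I fix, once and for all, a single closed walk $\gamma$ in $\Gamma_{L}$ that visits every vertex of $\Gamma_{L}$; adding the edge-multiplicity vector of $\gamma$ to $(n x_{e})_{e}$ produces a balanced integer circulation with connected support, which by Euler's criterion is the edge-multiplicity vector of a single closed walk in $\Gamma_{L}$. Reading off the labels along that walk gives a cyclically reduced word $w = w_{n}$ --- reducedness, including across the cyclic seam, uses $L \ge 2$ and the overlap of consecutive edges in an $(L-1)$-letter vertex --- and the number of cyclic occurrences of a length-$L$ word $v$ in $w$ equals its edge-multiplicity, with occurrences of shorter words controlled by the same data. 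Since the integer current $\mu_{w}$ counts cyclic occurrences in $w$ together with those in $w^{-1}$, and $\mu$ is symmetric, one obtains $\frac{1}{2n}\mu_{w}(v) = \mu(v) + O(\delta) + O(|\gamma|\,L/n)$ uniformly for $v \in B_{L}$. Choosing first $\delta$ small enough and then $n$ large enough makes this smaller than $\varepsilon$ for all $v \in B_{L}$, as required; the key point making the second error harmless is that $|\gamma|$ depends only on $L$, not on the denominator $n$.

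The main obstacle is precisely this realisation step --- manufacturing an honest single conjugacy class out of approximate balanced numerical data --- and, inside it, coping with the near-inevitable disconnectedness of the support subgraph. The bridging device above resolves it, the point requiring care being that the bridge walk $\gamma$ must be chosen independently of $n$, so that the error it contributes is $O(1/n)$ after rescaling and thus disappears in the limit. One could instead give a shorter but less self-contained argument: $\Sigma_{\CA}$ is an irreducible subshift of finite type and hence has the specification property, so by the classical density theorem the periodic-orbit measures are weak-$*$ dense among the shift-invariant probability measures on $\Sigma_{\CA}$; averaging such a measure with its image under $Z \mapsto Z^{-1}$ and rescaling identifies it with a projectivised integer current $[\mu_{w}]$, which gives the statement at once.
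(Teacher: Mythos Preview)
The paper does not supply its own proof of this proposition: it is quoted from R.~Martin's thesis \cite{mart} and attributed there to M.~Bestvina, so there is nothing in the text against which to compare your argument.

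That said, your argument is sound. The Rauzy/de~Bruijn realisation --- reinterpreting the Kolmogorov identities at level $L$ as a flow-balance condition on $\Gamma_{L}$, approximating rationally inside the rational polyhedral cone of non-negative circulations, bridging the possibly disconnected support by a fixed closed walk $\gamma$ depending only on $L$, and then invoking the directed Euler criterion to produce a single cyclically reduced word --- is exactly the combinatorial mechanism behind the standard proof. The two points that require care, namely that the bridge contributes only $O(1/n)$ after rescaling (because $|\gamma|$ is fixed once $L$ is) and that the symmetry $\mu(v)=\mu(v^{-1})$ absorbs the a~priori asymmetry of the chosen rational circulation via
\[
\tfrac{1}{2n}\,\mu_{w}(v)=\tfrac{1}{2}\bigl(x_{v}+x_{v^{-1}}\bigr)+O\!\left(\tfrac{|\gamma|}{n}\right),
\]
are both handled correctly. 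One cosmetic remark: once you have, for each $k$, a word $w_{k}$ and scalar $c_{k}$ with $|c_{k}\mu_{w_{k}}(v)-\mu(v)|<1/k$ for all $|v|\le k$, pointwise convergence on all of $F(\CA)$ follows directly from the description of the weak topology; no diagonal extraction is needed.

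Your alternative route via specification is also correct and is in fact the shortest proof: $\Sigma_{\CA}$ is a mixing subshift of finite type, so Sigmund's theorem yields density of periodic-orbit measures among the $\sigma$-invariant probabilities; symmetrising a periodic-orbit approximant under $Z\mapsto Z^{-1}$ keeps it close to the (already symmetric) normalised $\mu$ and turns it into a scalar multiple of some $\mu_{w}$.
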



\section{Geometric currents}
\label{subsec:geomcurr}

A large class of very natural examples for a current $\mu \in
\Curr(\FN)$ is given by any geodesic lamination ${\LG} \subset S$,
provided with a transverse measure $\mu'$, where $S$ is a hyperbolic
surface with boundary as considered in the section 3 of \cite{chl1-I}
and section~6 of \cite{chl1-II}.  In this case the measure $\mu$ on
$\partial^{2} \FN$ can be nicely seen geometrically through the
canonical identification of $\partial \FN$ with the space $\partial
\tilde S$ of ends of the universal covering $\tilde S$, which is
embedded as subset in the boundary at infinity $S_\infty^1 = \partial
\Hy^2$.  Two disjoint intervals $A, B \subset S_\infty^1$, with
intersections $A' = A \cap \partial \tilde S, B' = B \cap \partial
\tilde S$, define a measurable set $A' \times B'$ of $\partial^{2}
\FN$, and the measure $\mu(A'\times B')$ is precisely given by the
measure $\mu'(\beta)$ of an arc $\beta$ in $S$ which is transverse to
${\LG}$, and which lifts to an arc $\tilde \beta$ in $\tilde S \subset
\Hy^2$ that has its two endpoints on the two extremal leaves of
${\tilde \LG} \subset\tilde S$ which bound the set of all leaves of
$\tilde \LG$ that have one endpoint in $A$ and one endpoint in $B$.


\section{The dual metric for $\R$-trees}
\label{subsec:dualmetric}

In this section we assume familiarity of the reader with the notions
of \cite{chl1-II}, from which we also import the notation without
further explanations.

In the last section we have seen that every transverse measure $\mu$
on a geodesic lamination $\LG$ which is contained in a hyperbolic
surface $S$, with non-empty boundary and with an identification
$\pi_{1} S = \FN$, gives rise to a canonical current in $\Curr(\FN)$
which we also denote by $\mu$. In section~6 of \cite{chl1-II} we have
discussed that $({\LG},\mu)$ determines an $\R$-tree $T_{\mu}$ with
isometric $\FN$-action, and that the support of the current $\mu$ and
the dual lamination of $T_{\mu}$ are the same: this lamination is
precisely the lamination associated to ${\LG} \subset S$.

One of the most intriguing aspects of the relationship between
currents and $\R$-trees comes from the attempt to extend this
correspondence, which for surfaces is almost tautological, to more
general $\R$-trees $T$.  Indeed, the goal of this section is to
understand better the true nature of the interaction between the
metric on $T$ and an invariant measure $\mu$ carried by
the dual  lamination $L(T)$ as defined in \cite{chl1-II}.

\bigskip

In the sequel we consider the dual lamination $L(T)$ as algebraic
lamination $L^{2}(T)$, i.e. a non-empty, $\FN$-invariant,
flip-invariant and closed subset of $\partial^2\FN$.  From
\cite{chl1-II} we know that there is a map ${\cal Q}^{2}: L^{2}(T) \to
\bar T$ which is $\FN$-equivariant and continuous (see Proposition~8.3
of \cite{chl1-II}).  Here $T$ is an element of the boundary $\partial
\cvn$ of the unprojectivized Outer space $\cvn \,$: in particular, $T$
is a non-trivial $\R$-tree with minimal, very small $\FN$-action by
isometries (see \cite{chl1-II}, \S 2).  We also require that the
$\FN$-orbits of points are dense in $T$ (``$T$ has dense orbits''),
and we denote by $\bar T$ the metric completion of $T$.

\begin{cor}
\label{Q2measurable}
For all $T \in \partial \cvn$ with dense orbits, the map ${\cal
Q}^{2}: L^{2}(T) \to \bar T$ is measurable (with respect to the two
Borel $\sigma$-algebras on $L^{2}(T)$ and on $\bar T$).  \qed
\end{cor}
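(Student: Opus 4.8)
The plan is to deduce measurability of $\CQ^2$ from its continuity as a map on the subspace $L^2(T) \subset \partial^2\FN$, together with standard descriptive set theory. By Proposition~8.3 of \cite{chl1-II}, which we may invoke, $\CQ^2: L^2(T) \to \bar T$ is continuous, where $L^2(T)$ carries the subspace topology from $\partial^2\FN$ and $\bar T$ carries the metric topology. A continuous map between topological spaces is automatically Borel measurable: the preimage of every open set is open, hence Borel, and the Borel $\sigma$-algebra on $\bar T$ is generated by the open sets. So the \emph{only} thing to check is that the Borel $\sigma$-algebra on $L^2(T)$ referred to in the statement — which should be the trace of the Borel $\sigma$-algebra of $\partial^2\FN$, since that is the natural measure-theoretic structure carried by a current's support — coincides with (or contains) the Borel $\sigma$-algebra generated by the subspace topology of $L^2(T)$.

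First I would record that $L^2(T)$, being a closed $\FN$- and flip-invariant subset of the (locally compact, second countable, hence Polish) space $\partial^2\FN$, is itself a closed subset of a Polish space, so it is Polish in the subspace topology. For a subspace $Y \subseteq X$ of a topological space, the Borel $\sigma$-algebra of $Y$ (generated by $Y$'s subspace topology) always equals $\{B \cap Y : B \in \mathcal{B}(X)\}$, the trace of $\mathcal{B}(X)$ on $Y$ — this is a routine fact, proved by checking that $\{B \subseteq X : B \cap Y \text{ is Borel in } Y\}$ is a $\sigma$-algebra containing the open sets of $X$. Hence the two candidate $\sigma$-algebras on $L^2(T)$ agree, and there is no ambiguity. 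Then, combining with the continuity of $\CQ^2$: for any open $U \subseteq \bar T$, the set $(\CQ^2)^{-1}(U)$ is open in $L^2(T)$, hence Borel in $L^2(T)$; since Borel sets of $\bar T$ are generated by open sets, $(\CQ^2)^{-1}(B)$ is Borel in $L^2(T)$ for every $B \in \mathcal{B}(\bar T)$, which is exactly the assertion.

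The main obstacle here is not mathematical depth but rather making sure the statement is interpreted correctly: the potential pitfall is that one might worry the measure $\mu$ lives on $\partial^2\FN$ while $\CQ^2$ is only defined on the (possibly measure-zero-boundary) subset $L^2(T)$, so that one needs $L^2(T)$ itself to be $\mu$-measurable before it even makes sense to integrate $\CQ^2$ against $\mu$. This is handled by the fact that $L^2(T)$ is \emph{closed} in $\partial^2\FN$ (stated above and in \cite{chl1-II}), hence Borel, hence in the domain of every Borel measure; and the support of $\mu$ is contained in $L^2(T)$ by hypothesis, so $\mu$ restricts to a Borel measure on the Polish space $L^2(T)$. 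With that settled, everything reduces to the trivial observation that continuous maps are Borel. I expect the write-up to be two or three sentences; the only thing worth spelling out is the identification of the subspace Borel structure on $L^2(T)$ with the trace structure, since that is what licenses passing from "continuous on $L^2(T)$ with its own topology" to "Borel with respect to the $\sigma$-algebra inherited from $\partial^2\FN$."
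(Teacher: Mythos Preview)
Your argument is correct and matches the paper's approach: the paper simply states the corollary with a \qed and no proof, treating it as an immediate consequence of the continuity of $\CQ^{2}$ established in Proposition~8.3 of \cite{chl1-II}. Your additional care about identifying the subspace Borel $\sigma$-algebra on $L^{2}(T)$ with the trace of the Borel $\sigma$-algebra of $\partial^{2}\FN$ is sound and more explicit than the paper, but not a different route.
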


We apply the last corollary in order to define an {\em extended
pseudo-metric} $d_{\mu}$ on $\bar T$, for any current $\mu$ which is
carried by $L(T)$. An extended pseudo-metric is just like a
metric, except that distinct points $P, Q$ may have distance 0,
positive distance, or distance $\infty$.

\begin{defn}
\label{extendedpseudodistance}
Let $T \in \partial {cv}_{N}$ be with dense orbits, and assume that
$\mu \in \Curr(F_{N})$ satisfies $\supp(\mu) \subset L(T)$. One then
defines, for any $P, Q \in \bar T$, their {\em $\mu$-distance} as
follows: 
\[
d_{\mu}(P, Q) = \mu(({\cal Q}^{2})^{-1}([P, Q]))
\, \, \, [\, = {\cal Q}^{2}_{*}(\mu)([P, Q]) \, ]
\]
\end{defn}

Clearly the function $d_{\mu}$ is symmetric and, since $\bar T$ is a
tree, it satisfies the triangular inequality. For three points $P, Q,
R \in \bar T$ with $Q \in [P, R]$ one has $d_{\mu}(P, R) = d_{\mu}(P,
Q) + d_{\mu}(Q, R)$ unless $\mu(({\cal Q}^{2})^{-1}(\{Q\})) > 0$,
which of course can happen (for example if $Q$ has non-trivial
stabilizer which carries all of the support of $\mu$).

\medskip

We distinguish now three special cases (note that we always assume
that $T$ is a minimal $\R$-tree, so that it agrees with its interior):
The metric $d_{\mu}$ is called {\em zero throughout $T$} if any two
points in $T$ have $\mu$-distance 0.  It is called {\em infinite
throughout $T$} if any two distinct points in $T$ have $\mu$-distance
$\infty$.  It is called {\em positive throughout $T$} if any two
distinct points in $T$ have positive finite $\mu$-distance.  Otherwise
we call the $\mu$-distance {\em mixed}.

\medskip

A particular case, which is of special importance, is the following:

\begin{defn}
\label{genuinetrees}
An $\R$-tree $T \in \partial cv_{N}$ is called {\em dually uniquely ergodic} if the
dual lamination $L(T)$ is uniquely ergodic.
\end{defn}

We note that, in the case where $T$ is dually uniquely ergodic, the $\mu$-distance is
uniquely determined by $T$, up to rescaling. In this case we suppress
the measure $\mu$ and speak simply of the {\em dual distance} $d_{*}$
on $T$.

\begin{conj}
\label{nomixed}
If $T$ is dually uniquely ergodic then the dual distance is not mixed.
\end{conj}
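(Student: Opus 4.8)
The plan is to show that the dual distance $d_*$ on a dually uniquely ergodic tree $T$ cannot take both finite positive values and the value $\infty$ (nor can it be zero on some pairs and positive on others), so that exactly one of the three ``throughout'' regimes holds. The starting observation is that $d_*$ is induced by the pushforward measure $\nu := \mathcal{Q}^2_*(\mu)$ on $\bar T$: for $P, Q$ in $T$, $d_*(P,Q) = \nu([P,Q])$. First I would record the basic compatibility properties: $\nu$ is $\FN$-invariant (since $\mu$ is a current and $\mathcal{Q}^2$ is $\FN$-equivariant), and $d_*$ is additive along segments up to atoms of $\nu$ at branch or intermediate points. I would also fix, via $\cvn$, a point $P_0 \in T$ whose $\FN$-orbit is dense, and use density plus the triangle inequality to reduce any statement about ``all pairs'' to a statement about pairs $(P_0, g P_0)$, $g \in \FN$.

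The key structural input is a dichotomy ``at $P_0$'': either $\nu([P_0, Q]) < \infty$ for every $Q$ in a neighbourhood of $P_0$ (indeed for every $Q \in T$, by covering a segment with finitely many pieces), or $\nu$ assigns infinite mass to every nondegenerate segment issuing from $P_0$. The engine behind this is unique ergodicity of $L(T)$ combined with Lemma~\ref{lem:pseudocontinuous} and Lemma~\ref{lem:pseudosur}: if some segment $[P_0,Q]$ had $0 < \nu([P_0,Q]) < \infty$ while another segment $[P_0, Q']$ had $\nu([P_0, Q']) = \infty$, I would localize the infinite mass, use the measurable map $\mathcal{Q}^2$ to pull it back to a sublamination $L' \subset L(T)$ supporting a nonzero current $\mu'$ (here is where Lemma~\ref{lem:pseudosur} enters), and argue that $\mu'$ and $\mu$ are not projectively proportional — contradicting unique ergodicity — because $\mu$ gave finite total mass to the $\mathcal{Q}^2$-preimage of a segment on which $\mu'$ is supported with infinite mass. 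The finite-vs-zero half of the dichotomy is handled the same way: if $d_*$ vanishes on some segment but is positive on another, the positive part carries a nonzero subcurrent whose support is a proper sublamination, again contradicting unique ergodicity (or else $d_*$ is identically zero, which is one of the allowed cases).

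Having the dichotomy at a single dense-orbit point $P_0$, I would propagate it across $T$ by equivariance: for any $g \in \FN$, the tree looks the same at $g P_0$ as at $P_0$, so the same regime holds there; and since $\{g P_0\}$ is dense and $\bar T$ is a tree, any segment $[P, Q]$ with $P, Q \in T$ can be approximated/covered using the orbit, transferring the finiteness or the zeroness or the infiniteness verdict to $[P,Q]$. Concretely: if $d_*$ is finite near every orbit point, then finiteness on overlapping small segments gives finiteness on all of $[P,Q]$ by additivity; if $d_*$ is infinite on every nondegenerate segment from every orbit point, then it is infinite on $[P,Q]$; similarly for the zero alternative. Assembling these, $d_*$ is zero throughout, positive throughout, or infinite throughout — which is exactly the assertion that it is not mixed.

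The main obstacle, and the step I expect to require real care, is the localization argument that turns ``a segment of infinite $\nu$-mass'' into ``a proper subcurrent of $\mu$''. The subtlety is that $\mathcal{Q}^2$ need not be injective, so the preimage of a segment is a genuinely measure-theoretic object, and one must verify that restricting $\mu$ to $(\mathcal{Q}^2)^{-1}$ of a half-open subsegment (chosen to avoid atoms) yields a measure on $\partial^2 \FN$ that is still $\FN$-invariant after suitable renormalization — which is not automatic, since a bare segment is not $\FN$-invariant. The fix I anticipate is not to restrict $\mu$ directly but to run the counting/Lemma~\ref{lem:pseudosur} construction on a leaf whose $\mathcal{Q}^2$-image passes through the infinite-mass region, producing an honest current $\mu'$ carried by a sublamination, and then to compare $\mu'$ with $\mu$ via their Kolmogorov values on a cleverly chosen word; getting a genuine contradiction with unique ergodicity (rather than merely $\mu' \leq c\,\mu$) is the delicate point, and it is essentially why the statement is only a conjecture here.
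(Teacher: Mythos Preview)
The statement is labelled a \emph{Conjecture} in the paper and is not proved there; there is no proof in the paper to compare your proposal against. What you have written is an attempt at an open problem, and --- as you yourself concede in your final sentence --- it does not close.

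The gap you flag is not a technicality to be patched but the heart of the matter. Unique ergodicity of $L(T)$ says precisely that \emph{every} current carried by $L(T)$ is a scalar multiple of $\mu$. Hence any current $\mu'$ you manufacture from leaves of $L(T)$ --- by restriction, by the counting construction of Lemma~\ref{lem:pseudosur}, or by any other device --- will satisfy $\mu' = c\mu$ for some $c>0$, and so cannot contradict unique ergodicity. Your ``dichotomy at $P_0$'' is therefore not a step toward the conjecture; it \emph{is} the conjecture, restated locally. What would actually be needed is an independent reason why the pushforward $\nu = \mathcal{Q}^2_*(\mu)$ of a uniquely ergodic $\mu$ is forced to be locally finite (or uniformly zero, or uniformly infinite) on segments of $T$, and nothing in the available toolkit --- continuity of $\mathcal{Q}^2$, $\FN$-equivariance, density of orbits --- supplies this. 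The map $\mathcal{Q}^2$ collapses each leaf to a single point, so the fibre structure over a segment can be complicated, and there is no evident mechanism forcing $\nu$ to behave homogeneously across $T$. The paper leaves the statement as a conjecture for exactly this reason.
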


We finish this article by proving that the case of dual distances which
are infinite or zero throughout the interior does actually exist, and
that it occurs in a natural context.  We assume from now on a certain
familiarity with some of the modern tools for the geometric theory of
automorphisms of free groups.  Background material and references can
be found in \cite{vogt}.  In particular we will use below the
following facts and definitions:

\begin{rem}
\label{rem:facts}
(1) An automorphism $\alpha$ of $\FN$ is called {\em irreducible with
  irreducible powers (iwip)} if no non-trivial proper free factor of
  $\FN$ is mapped by any positive power of $\alpha$ to a conjugate of
  itself.

\smallskip
\noindent
(2) It is known (compare \cite{ll4}) that for every iwip automorphism
$\alpha$ there is, up to $\FN$-equivariant homothety, precisely one
minimal {\em forward limit} $\R$-tree $T_{\alpha} \in \partial cv_{N}$
which admits a homothety $H: T_{\alpha} \to T_{\alpha}$ with
stretching factor $\lambda_{\alpha} > 1$ that twistedly commutes with
$\alpha$.  By this we mean that
\[
\alpha(w) H = H w: T_{\alpha} \to T_{\alpha}
\]
holds for every $w \in F_{N}$.
Note that both, the map $H$ as well as the $\FN$-action on $T$, extend 
canonically to the metric completion $\bar T_{\alpha}$, so that  
the last statement holds also for $\bar T_{\alpha}$ instead of 
$T_{\alpha}$.

\smallskip
\noindent
(3) In terms of the induced action of $Out(\FN)$ on the
non-projectivized closed Outer space $\overline{cv}_{N}$ (see
\cite{chl1-II}, \S{}9), the equation in (2) can be expressed by stating
\[
T \alpha_{*} = \alpha^{-1}_{*} T = \lambda_{\alpha} T \, ,
\]
where $\lambda_{\alpha} T$ denotes the tree $T$ rescaled by the factor
$\lambda_{\alpha}$.

\smallskip
\noindent
(4) As a consequence of the equation in (2), the homothety $H$
satisfies:
\[
H \CQ^{2} = \CQ^{2} \alpha:  L^{2}(T_{\alpha})
\to \wbar T_{\alpha}\, .
\]

\smallskip
\noindent
(5) There is no further fixed point of the $\alpha_{*}$-action on 
$\overline{CV}_{N}$ other than the points $[T_{\alpha}]$ and $[T_{\alpha\inv}]$ 
specified above.  In \cite{ll4} it is shown that any iwip 
automorphism has North-South dynamics on $\overline{CV}_{N}$.

\smallskip
\noindent
(6) One knows from \cite{mart}, Theorem~30 (again attributed to
M.~Bestvina) that, if $\alpha$ is not {\em geometric}, i.e. induced by
a surface homeomorphism $h: S \overset{\approx}{\to} S$ via some
identification $\FN \cong \pi_{1} S$, then the $\alpha_{*}$-action on
$\Pr\Curr(\FN)$ possesses precisely two fixed points, an attractive
and a repelling one, and that $\alpha_{*}$ has a North-South dynamics
on $\Pr\Curr(\FN)$.

\smallskip
\noindent
(7) Let us denote by $\mu_{\alpha} \in \Curr(\FN)$ a representative of
the attracting fixed point of the $\alpha_{*}$-action on
$\Pr\Curr(\FN)$.  It satisfies $\alpha_{*}(\mu_{\alpha}) =
\lambda_{\alpha} \mu_{\alpha}$, see \cite{mart}, where
$\lambda_{\alpha}$ is the stretching factor given in (2).

\smallskip
\noindent
(8) Following \cite{mart}, the support of $\mu_{\alpha}$ is contained
in the so called {\em legal lamination} $L_{\alpha} \in \Lambda(\FN)$:
Its leaves are represented, for any train track representative $f:
\tau \to \tau$ of $\alpha$, by biinfinite legal paths in $\tau$, and
consequently by non-trivial (in fact: biinfinite) geodesics in
$T_{\alpha}$ (compare with the {\em attractive lamination} defined in
\cite{bfh}).  In particular, it follows from the alternative
definition of the dual lamination, $L(T) =L_{\CQ}(T)$, given in
Theorem~1.1 of \cite{chl1-II}, that the two laminations $L_{\alpha}$
and $L(T_{\alpha})$ are disjoint.

\smallskip
\noindent
(9) Any iwip automorphism possesses a train track representative $f:
\tau \to \tau$ with transition matrix that is primitive.  As a
consequence, any edge $e$ of $\tau$ will have an iterate $f^k(e)$
which crosses over all other edges.  The canonical image in
$T_{\alpha}$ (under the map $i: \tilde \tau \to T_{\alpha}$, see
\cite{ll4}) of any lift of $f^k(e)$ to the universal covering $\tilde
\tau$ is a segment which has the property that the union of its
$\FN$-translates covers all of $T_{\alpha}$.
\end{rem}

\begin{prop}
\label{prop:invtreegen}
For every non-geometric iwip automorphisms $\alpha \in \Aut (\FN)$,
the forward limit tree $T_{\alpha}$ is 
dually uniquely ergodic.
\end{prop}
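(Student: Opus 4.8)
\textbf{Proof proposal for Proposition~\ref{prop:invtreegen}.}
The plan is to show that the dual lamination $L(T_{\alpha})$ carries, up to scalar, a unique current by combining the North-South dynamics of $\alpha_{*}$ on $\Pr\Curr(\FN)$ (Remark~\ref{rem:facts}(6)) with the structure results on the $\supp$ map from \S\ref{subsec:spacecurr}. First I would consider an arbitrary current $\mu$ carried by $L(T_{\alpha})$, i.e.\ with $\supp(\mu) \subset L(T_{\alpha})$, and study the sequence of normalized iterates $[\alpha_{*}^n(\mu)]$ in the compact space $\Pr\Curr(\FN)$. The key point is that $L(T_{\alpha})$ is an $\alpha$-invariant lamination (it is the dual lamination of the $\alpha_{*}$-invariant tree $T_{\alpha}$, using Remark~\ref{rem:facts}(3), which gives $T_{\alpha}\alpha_{*} = \lambda_{\alpha} T_{\alpha}$, so $L(T_{\alpha})$ is preserved), hence every $\alpha_{*}^n(\mu)$ is again carried by $L(T_{\alpha})$. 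Now let $\delta$ be the saturation of the closure of the $\alpha_{*}$-orbit of $L(T_{\alpha})$ in $\Lambda(\FN)$ — actually, since $L(T_{\alpha})$ is already $\alpha$-invariant, I would simply take $\delta$ to be the (closed) saturated set consisting of $L(T_{\alpha})$ together with all its sublaminations; this is closed because $L(T_{\alpha})$ is closed and the set of sublaminations of a fixed lamination is closed in $\Lambda(\FN)$. By Lemma~\ref{lem:pseudocontinuous}, the full preimage $\Delta \subset \Pr\Curr(\FN)$ of $\delta$ under $\Pr\supp$ is closed, and it is clearly $\alpha_{*}$-invariant and nonempty (it contains $[\mu]$, and by Lemma~\ref{lem:pseudosur} it contains at least one current).

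Next I would invoke the North-South dynamics (Remark~\ref{rem:facts}(6)): $\alpha_{*}$ on $\Pr\Curr(\FN)$ has exactly one attracting fixed point $[\mu_{\alpha}]$ and one repelling fixed point, call it $[\mu_{\alpha\inv}]$ (the attracting fixed point of $\alpha\inv_*$). Since $\Delta$ is closed, nonempty and $\alpha_{*}$-invariant, and since under North-South dynamics the only closed invariant subsets that can avoid being dragged to $[\mu_{\alpha}]$ under forward iteration are those contained in the repelling point's basin boundary — more precisely, any point of $\Delta$ other than $[\mu_{\alpha\inv}]$ has forward orbit converging to $[\mu_{\alpha}]$, forcing $[\mu_{\alpha}] \in \Delta$, and symmetrically (or by a direct argument applied to $\alpha\inv$) $[\mu_{\alpha\inv}] \in \Delta$. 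Thus $[\mu_{\alpha}] \in \Delta$, which by definition of $\Delta$ means $\supp(\mu_{\alpha})$ is a lamination in $\delta$, i.e.\ a sublamination of $L(T_{\alpha})$ (this is consistent with Remark~\ref{rem:facts}(8), which actually places $\supp(\mu_{\alpha})$ inside the legal lamination $L_{\alpha}$; note $L_{\alpha}$ and $L(T_{\alpha})$ are disjoint, so I must be careful here — see the obstacle below).

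The resolution of that tension is exactly the crux: I would NOT try to put $\mu_{\alpha}$ itself inside $L(T_{\alpha})$, but rather run the argument with $\alpha$ replaced by $\alpha\inv$. The dual lamination of the \emph{forward} limit tree $T_{\alpha}$ equals the \emph{legal} lamination of $\alpha\inv$ (this is essentially the content of the duality $L(T_{\alpha}) = L_{\CQ}(T_\alpha)$ together with Remark~\ref{rem:facts}(8) applied to $\alpha\inv$: the dual lamination of $T_{\alpha} = T_{(\alpha\inv)\inv}$ is the repelling/backward object for $\alpha\inv$). Hence a current carried by $L(T_{\alpha})$ is exactly a current whose support lies in the legal lamination $L_{\alpha\inv}$, and by Remark~\ref{rem:facts}(7) applied to $\alpha\inv$, the attracting current $\mu_{\alpha\inv}$ of $\alpha\inv_*$ is such a current. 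So I take $\delta$ = sublaminations of $L(T_{\alpha})$ as above, $\Delta = (\Pr\supp)^{-1}(\delta)$ closed and $\alpha_{*}$-invariant and nonempty; applying North-South dynamics of $\alpha_{*}$, every point of $\Delta$ has backward orbit (under $\alpha_{*}$, i.e.\ forward under $\alpha\inv_*$) converging to $[\mu_{\alpha\inv}]$, so $[\mu_{\alpha\inv}] \in \Delta$; and any point of $\Delta$ distinct from $[\mu_{\alpha\inv}]$ would have forward $\alpha_{*}$-orbit converging to $[\mu_{\alpha}]$, forcing $[\mu_{\alpha}]$ into $\Delta$ and hence $\supp(\mu_{\alpha}) \subset L(T_{\alpha})$; but $\supp(\mu_{\alpha}) \subset L_{\alpha}$ and $L_{\alpha} \cap L(T_{\alpha}) = \emptyset$ (Remark~\ref{rem:facts}(8)), a contradiction. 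Therefore $\Delta = \{[\mu_{\alpha\inv}]\}$, i.e.\ every current carried by $L(T_{\alpha})$ is projectively equal to $\mu_{\alpha\inv}$, and $L(T_{\alpha})$ is uniquely ergodic; by Definition~\ref{genuinetrees}, $T_{\alpha}$ is dually uniquely ergodic.

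\textbf{Main obstacle.} The delicate step is the clean separation of roles between the legal lamination $L_{\alpha}$ (which carries $\mu_{\alpha}$) and the dual lamination $L(T_{\alpha})$ (which I claim equals $L_{\alpha\inv}$, so carries $\mu_{\alpha\inv}$), and making rigorous the North-South trichotomy for closed invariant subsets: one must verify that a closed $\alpha_{*}$-invariant $\Delta$ either is $\{[\mu_{\alpha\inv}]\}$, or contains $[\mu_{\alpha}]$, or contains both, with no other possibility — this uses that $\Pr\Curr(\FN)$ is compact so every orbit accumulates, and that under strict North-South dynamics the $\omega$-limit of any point $\neq$ repeller is the attractor. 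I also need Lemma~\ref{lem:pseudosur} precisely to know $\Delta \neq \emptyset$ before identifying it, and Lemma~\ref{lem:pseudocontinuous} to know $\Delta$ is closed; the saturation hypothesis there is what lets sublaminations of $L(T_{\alpha})$ (which might appear as supports of limiting currents with smaller support) stay inside $\delta$. Finally, one should double-check that $\mu_{\alpha\inv}$ is genuinely carried by $L(T_{\alpha})$ and not merely by a proper sublamination — but this follows since $\supp(\mu_{\alpha\inv}) = L_{\alpha\inv} = L(T_{\alpha})$ by the uniqueness of leaves in the minimal legal lamination of an iwip (all leaves of the legal lamination are in the support of the attracting current, as the current has full support on it).
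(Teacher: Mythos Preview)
Your argument is correct and follows essentially the same route as the paper: define $\Delta$ as the $\Pr\supp$-preimage of the closed saturated set of sublaminations of $L(T_{\alpha})$, show it is closed (Lemma~\ref{lem:pseudocontinuous}), non-empty (Lemma~\ref{lem:pseudosur}) and $\alpha_{*}$-invariant, then use North-South dynamics on $\Pr\Curr(\FN)$ together with $\supp(\mu_{\alpha})\subset L_{\alpha}$ and $L_{\alpha}\cap L(T_{\alpha})=\emptyset$ (Remark~\ref{rem:facts}(8)) to exclude $[\mu_{\alpha}]$ and conclude $\Delta=\{[\mu_{\alpha\inv}]\}$. One remark: your asserted identification $L(T_{\alpha})=L_{\alpha\inv}$ is neither proved in the paper nor needed --- the containment $\supp(\mu_{\alpha\inv})\subset L(T_{\alpha})$ is an \emph{output} of the argument (since $\Delta=\{[\mu_{\alpha\inv}]\}$), not an input, so you can simply drop that detour.
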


\begin{proof}
From the $\Out(\FN)$-equivariance of the map $\lambda^{2}: \partial
{cv}_{N} \to \Lambda(\FN)$ in Proposition 9.1 of \cite{chl1-II},
together with Remark~\ref{rem:facts}~(3) above, it follows that the
dual lamination $L(T_{\alpha})$ is fixed by $\alpha$.  Hence the set
$\Delta(\alpha) \subset \Pr\Curr (\FN)$, which consists of all
preimages under the map $\Pr\supp$ of the lamination $L(T_{\alpha})$
and any of its sublaminations, is invariant under the action of
$\alpha_{*}$ (by the equivariance of the maps $\supp$ and $\Pr\supp$,
see Proposition~\ref{currenttolamination}).  As the set of all
sublaminations of a given lamination is closed, see Proposition~6.4 of
\cite{chl1-I}, it follows from Lemma~\ref{lem:pseudocontinuous} that
$\Delta(\alpha)$ is closed.  Furthermore $\Delta(\alpha)$ is
non-empty, by Lemma~\ref{lem:pseudosur}.  Thus $\Delta(\alpha)$ is the
non-empty union of closures of $\alpha_{*}$-orbits, so that it must
contain the closure of at least one $\alpha_{*}$-orbit in $\Pr\Curr
(\FN)$. From the North-South dynamics of the $\alpha_{*}$-action on
$\Pr\Curr (\FN)$ (Remark~\ref{rem:facts}~(6)) it follows that either
$\Delta(\alpha)$ consists of precisely one of the two fixed points
$[\mu_{\alpha}]$ or $[\mu_{\alpha\inv}]$, or else it contains both of
them.

But according to Remark~\ref{rem:facts}~(8) the support of
$\mu_{\alpha}$ is contained in the legal lamination $L_{\alpha}$,
which in turn is disjoint from $L(T_{\alpha})$.  Hence
$[\mu_{\alpha}]$ is not contained in $\Delta(\alpha)$, which proves
that the latter consists precisely of the point $[\mu_{\alpha\inv}]$.
This shows that $L(T_{\alpha})$ supports only one (projectivized)
current, namely $[\mu_{\alpha\inv}]$.
\end{proof}

We can now give the proof of our main result as stated in \S 1:

\begin{proof}[Proof of Theorem \ref{thm:throughout}]
From Proposition~\ref{prop:invtreegen} and its proof we know that the
forward limit tree $T_{\alpha}$ has dual lamination $L(T_{\alpha})$
which carries an (up to homothety) unique current, and that this
current is equal to $\mu_{\alpha\inv}$.

We now calculate, for any $P, Q \in T_{\alpha}$ (using
Remark~\ref{rem:facts}~(4) to get the third, and (7) to get the sixth
of the equalities below):
\[
d(H(P), H(Q)) = \lambda_{\alpha}d(P, Q)
\]
and
\[
\begin{array}[t]{rcl}d_{*}(H(P), H(Q)) & = &
\mu_{\alpha\inv}((\CQ^{2})\inv([H(P), H(Q)])) \\
&=&\mu_{\alpha\inv}((H \CQ^{2} \alpha\inv)\inv([H(P), H(Q)])) \\
&=&\mu_{\alpha\inv}(\alpha((\CQ^{2})\inv([P, Q]))) \\
&=&\alpha_{*}\inv(\mu_{\alpha\inv})((\CQ^{2})\inv([P, Q]))\\
&=&\lambda_{\alpha\inv}\mu_{\alpha\inv}((\CQ^{2})\inv([P, Q])) \\
&=&\lambda_{\alpha\inv}d_{*}(P, Q)
\end{array}
\] Assume now that some points $P \neq Q \in T_{\alpha}$ have finite
dual distance.  By iterating $H$ one finds an interval $[H^{n}(P),
H^n(Q)]$ with the property that the union of its $\FN$-translates
covers all of $T_{\alpha}$ (compare Remark~\ref{rem:facts}~(9)).  This
implies that any two points in $T_\alpha$ have finite dual distance.
If the dual distance function is furthermore non-zero, by the same
argument it follows that any two points have non-zero distance. Thus
the dual metric $d_{*}$ on $T_{\alpha}$ defines a non-trivial
$\R$-tree $T^{*}_{\alpha}$ with free $\FN$-action, and hence, since
the equation in Remark~\ref{rem:facts}~(2) carries over from
$T_{\alpha}$ to $T^{*}_{\alpha}$, the $\R$-tree $T^{*}_{\alpha}$
defines a fixed point $[T^{*}_{\alpha}]$ of the $\alpha_{*}$-action on
$\partial CV_{N}$ (see \S{}2 and \S{}9 of \cite{chl1-II}).  By
Remark~\ref{rem:facts}~(5) the point $[T^{*}_{\alpha}]$ must agree
with either $[T_{\alpha}]$ or $[T_{\alpha\inv}]$.  But this cannot be
because we computed above that the streching factor of the
$\alpha_{*}$-action on $T^{*}_{\alpha}$ is equal to
$\lambda_{\alpha\inv}$ and hence bigger than 1 (which rules out
$[T^{*}_{\alpha}] = [T_{\alpha\inv}]$), but different from
$\lambda_{\alpha}$ (thus ruling out $[T^{*}_{\alpha}] = [T_{\alpha}]$,
by Remark~\ref{rem:facts}~(3)).

Hence the dual metric $d_{*}$ must be either zero or infinite
throughout $T_{\alpha}$.
\end{proof}

A concrete example of an automorphism that satisfies the properties
stated in Theorem~\ref{thm:throughout} as hypotheses is given in
\cite{abhs} by the automorphism
\[
\begin{array}[t]{rcl} a & \mapsto & ab \\
b & \mapsto & ac \\
c & \mapsto & a
\end{array}
\] 
of $F_{3}$, which has streching factor $1, 84\ldots$, while its
inverse
\[
\begin{array}[t]{rcl} a & \mapsto & c \\
b & \mapsto & c\inv a \\
c & \mapsto & c\inv b
\end{array}
\] 
has streching factor $1, 39\ldots$.

\smallskip

An iwip automorphism $\alpha \in \Aut(\FN)$ is called {\em
parageometric}, if $\alpha$ is not geometric, but $T_{\alpha}$ is a
geometric tree (see \cite{gl, gjll}).  It has been proved recently in
\cite{hm}, see also \cite{guir}, that in this case the iwip
automorphism $\alpha\inv$ is not parageometric, and that its
stretching factor $\lambda_{\alpha\inv}$ is strictly smaller than
$\lambda_{\alpha}$ (compare \cite{gaut}).  A family of such
automorphisms, one for any $N \geq 3$, has been exhibited and
investigated in\cite{jl}.  We summarize:

\begin{cor}
\label{cor:paranon}
The dual metric on the forward limit tree of any parageometric iwip
automorphism of $\FN$, or of its inverse, is always infinite or zero
throughout.
\end{cor}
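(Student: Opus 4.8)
The plan is to derive the corollary as an immediate consequence of Theorem~\ref{thm:throughout}, applied once to $\alpha$ and once to $\alpha\inv$; the only thing that genuinely needs checking is that both of these automorphisms satisfy the hypotheses of that theorem, i.e.\ that each is an iwip automorphism whose stretching factor differs from that of its inverse.

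First I would record that being iwip is stable under inversion: a proper free factor of $\FN$ has a conjugacy class preserved by some positive power of $\alpha$ precisely when the same holds for $\alpha\inv$, so $\alpha\inv$ is again iwip (although, as noted in the discussion preceding the corollary, $\alpha\inv$ is no longer parageometric). Next I would invoke the result of \cite{hm} (see also \cite{guir}), quoted in the text just above, that for a parageometric iwip $\alpha$ one has the strict inequality $\lambda_{\alpha\inv} < \lambda_{\alpha}$; in particular $\lambda_{\alpha} \neq \lambda_{\alpha\inv}$. Since $\lambda_{(\alpha\inv)\inv} = \lambda_{\alpha}$, this same inequality shows that $\alpha\inv$ also has stretching factor different from that of its inverse. (It is worth remarking that this inequality already forces $\alpha$, and hence also $\alpha\inv$, to be non-geometric, since an iwip induced by a surface homeomorphism is pseudo-Anosov and therefore has the same dilatation as its inverse; this is consistent with the non-geometricity implicitly needed to make ``the dual distance'' $d_{*}$ well defined via Proposition~\ref{prop:invtreegen}.)

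With the hypotheses in place, Theorem~\ref{thm:throughout} applied to $\alpha$ gives that $d_{*}$ on $T_{\alpha}$ is zero or infinite throughout $T_{\alpha}$, and applied to $\alpha\inv$ gives that $d_{*}$ on $T_{\alpha\inv}$ is zero or infinite throughout $T_{\alpha\inv}$; together these are exactly the assertion of the corollary. The main --- in fact the only --- point requiring attention is that Theorem~\ref{thm:throughout} is invoked for $\alpha\inv$ as well, so that the crux is the observation $\lambda_{(\alpha\inv)\inv} = \lambda_{\alpha}$ combined with the Handel--Mosher strict inequality; beyond this, no computation is needed.
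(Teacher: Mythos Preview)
Your proposal is correct and follows exactly the intended route: the paper itself does not give a separate proof of the corollary but introduces it with ``We summarize:'' after recording the Handel--Mosher inequality $\lambda_{\alpha\inv} < \lambda_{\alpha}$ for parageometric iwip $\alpha$, so that the corollary is just Theorem~\ref{thm:throughout} applied to $\alpha$ and to $\alpha\inv$. Your explicit remarks that iwip is preserved under inversion and that the strict inequality forces non-geometricity are helpful clarifications, but they do not deviate from the paper's line of argument.
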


\affiliationone{
Thierry Coulbois, Arnaud Hilion and Martin Lustig\\
Math\'ematiques (LATP)\\
Universit\'e Paul C\'ezanne -- Aix-Marseille III\\
av. escadrille Normandie-Ni\'emen\\
13397 Marseille 20\\ 
France
\email{Thierry.Coulbois@univ-cezanne.fr\\
Arnaud.Hilion@univ-cezanne.fr\\
Martin.Lustig@univ-cezanne.fr\\
}
}


\begin{thebibliography}{CHL1-III}

\bibitem[ABHS05]{abhs}
P.~Arnoux, V.~Berth\'e, A.~Hilion, and A.~Siegel.
\newblock Fractal representation of the attractive lamination of an
  automorphism of the free group.
\newblock {\em Ann. Inst. Fourier (Grenoble)}  {\bf  56}, 2161--2212, 2006



\bibitem[BFH97]{bfh}
M.~Bestvina, M.~Feighn, and M.~Handel.
\newblock Laminations, trees, and irreducible automorphisms of free 
groups.
\newblock {\em Geom. Funct. Anal.} {\bf 7}, 215--244, 1997.


\bibitem[Bon86]{bona1}
F.~Bonahon.
\newblock Bouts des vari\'et\'es hyperboliques de dimension {$3$}.
\newblock {\em Ann. of Math.} {\bf 124}, 71--158, 1986.

\bibitem[Bon88]{bona2}
F.~Bonahon.
\newblock The geometry of {T}eichm\"uller space via geodesic currents.
\newblock {\em Invent. Math.} {\bf 92}, 139--162, 1988.

\bibitem[CHL05]{chl2}
T.~Coulbois, A.~Hilion, and M.~Lustig.
\newblock Non-uniquely ergodic ${\R}$-trees are topologically 
determined by their algebraic lamination.
\newblock Preprint, 2005. (To appear in Illinois J. Math.)


\bibitem[CHL-I]{chl1-I}
T.~Coulbois, A.~Hilion, and M.~Lustig.
\newblock $\mathbb{R}$-trees and laminations for free groups {I}: Algebraic
  laminations. 
\newblock ArXiv:math/0609416.

\bibitem[CHL-II]{chl1-II}
T.~Coulbois, A.~Hilion, and M.~Lustig.
\newblock $\mathbb{R}$-trees and laminations for free groups {II}: The
  lamination associated to an ${\R}$-tree. 
\newblock ArXiv:math/0702281.

\bibitem[Fur02]{furm}
A.~Furman.
\newblock Coarse-geometric perspective on negatively curved manifolds and
  groups.
\newblock In {\em Rigidity in dynamics and geometry 
(Cambridge, 2000)}, 
  149--166. Springer, Berlin, 2002.

\bibitem[GJLL98]{gjll}
D.~Gaboriau, A.~Jaeger, G.~Levitt, and M.~Lustig.
\newblock An index for counting fixed points of automorphisms of free groups.
\newblock {\em Duke Math. J.} {\bf 93}, 425--452, 1998.

\bibitem[GL95]{gl}
D.~Gaboriau and G.~Levitt.
\newblock The rank of actions on {${\bf R}$}-trees.
\newblock {\em Ann. Sci. \'Ecole Norm. Sup. } {\bf 28}, 549--570, 1995.

\bibitem[Gau05]{gaut}
F.~Gautero.
\newblock Combinatorial mapping-torus, branched surfaces and free 
group automorphisms.
\newblock Preprint 05.

\bibitem[Gui04]{guir}
V.~Guirardel.
\newblock Core and intersection number for group actions on trees.
\newblock ArXiv:math/0407206.

\bibitem[HM04]{hm}
M.~Handel and L.~Mosher.
\newblock The expansion factors of an outer automorphism and its inverse.
\newblock ArXiv:math/0410015.

\bibitem[JL98]{jl}
A.~J\"ager and M.~Lustig.  
\newblock Free Group Automorphisms 
with Many Fixed Points at Infinity.
\newblock Preprint 1998

\bibitem[Kap03]{kapo2}
I.~Kapovich.
\newblock The frequency space of a free group.
\newblock {\em Internat. 
J. Alg. Comput.} (special Gaeta Grigorchuk's 50's birthday issue), 
{\bf 15}, 939--969, 2005.


\bibitem[Kap04]{kapo1}
I.~Kapovich.
\newblock Currents on free groups.
\newblock
``Topological and 
Asymptotic Aspects of Group Theory'' (R. Grigorchuk, M. Mihalik, M. 
Sapir and Z. Sunik, Editors), {\em AMS Contemporary Mathematics Series}, 
{\bf 394},  149--176, 2006.


\bibitem[KL06]{kl}
I.~Kapovich and M.~Lustig,  
\newblock The actions of $Out(F_n)$ on the boundary of Outer space 
and on the space of currents: minimal sets and equivariant 
incompatibility.
\newblock ArXiv:math/0605548.
(To appear in Ergodic Theory and Dyn. Systems.)

\bibitem[LL03]{ll4}
G.~Levitt and M.~Lustig.
\newblock Irreducible automorphisms of {$F\sb n$} have north-south dynamics on
  compactified outer space.
\newblock {\em J. Inst. Math. Jussieu}  {\bf 2}, 59--72, 2003.

\bibitem[Mar95]{mart}
R.~Martin.
\newblock {\em Non-Uniquely Ergodic Foliations of Thin Type, 
Measured Currents and Automorphisms of Free Groups}.
\newblock Ph.D.-thesis, UCLA, 1995.

\bibitem[Mor86]{morg}
J.~Morgan.
\newblock {Group actions on trees and the compactification of the space of
  classes of SO(n,1)-representations.}
\newblock {\em Topology} {\bf 25}, 1--33, 1986.

\bibitem[Sko96]{skora}
R.~Skora.
\newblock Splittings of surfaces.
\newblock {\em J. Amer. Math. Soc.} {\bf 9}, 605--616, 1996.

\bibitem[Vog02]{vogt}
K.~Vogtmann.
\newblock Automorphisms of free groups and outer space.
\newblock {\em Geom. Dedicata} {\bf 94}, 1--31, 2002.

\end{thebibliography}
\end{document}